\newcommand{\boldv}{\boldsymbol{v}}
\newcommand{\boldbeta}{\boldsymbol{\beta}}
\newcommand{\boldx}{\boldsymbol{x}}
\newcommand{\bolde}{\boldsymbol{e}}
\newcommand{\boldb}{\boldsymbol{b}}
\newcommand{\boldd}{\boldsymbol{d}}
\newcommand{\boldz}{\boldsymbol{z}}
\newcommand{\boldy}{\boldsymbol{y}}
\newcommand{\bolds}{\boldsymbol{s}}
\newcommand{\boldr}{\boldsymbol{r}}
\newcommand{\boldu}{\boldsymbol{u}}
\newcommand{\boldi}{\boldsymbol{i}}
\newcommand{\boldzero}{\boldsymbol{0}}
\newcommand{\boldxi}{\boldsymbol{\xi}}
\newcommand{\boldnu}{\boldsymbol{\nu}}
\newcommand{\Mperp}{M_{\perp}}
\newcommand{\coefB}{\tfrac{1}{\sqrt{2}}}
\newcommand{\phm}{\phantom{-}}
\pgfplotsset{compat=1.18}
\DeclareMathOperator{\diag}{diag}
\theoremstyle{plain}
\newtheorem{assumption}[theorem]{Assumption}
\newtheorem{remark}{Remark}
\Crefname{subsection}{Section}{Sections}
\crefname{subsection}{section}{sections}
\crefname{assumption}{assumption}{assumptions}
\Crefname{assumption}{Assumption}{Assumptions}
\title{
Recovering sparse DFT from missing Signals via interior point method on GPU
\thanks{
This work was supported by the U.S. Department of Energy, Office of Science, Advanced Scientific Computing Research Program under contracts DE-AC02-06CH11357.}}
\author{Wei Kuang\thanks{Department of Statistics, University of Chicago.}
\and Vishwas Rao\thanks{Mathematics and Computer Science Division, Argonne National Laboratory.}
\and Alexis Montoison\thanks{Mathematics and Computer Science Division, Argonne National Laboratory.}
\and François Pacaud \thanks{Centre Automatique et Systèmes, Mines Paris-PSL.}
\and Mihai Anitescu \thanks{Mathematics and Computer Science Division, Argonne National Laboratory.}
}
\newcommand*{\addFileDependency}[1]{
  \typeout{(#1)}
  \@addtofilelist{#1}
  \IfFileExists{#1}{}{\typeout{No file #1.}}
}
\begin{document}

\maketitle
\tableofcontents

\begin{abstract}
We propose a method to recover the sparse discrete Fourier transform (DFT) of a signal that is both noisy and potentially incomplete, with missing values.
The problem is formulated as a penalized least-squares minimization based on the inverse discrete Fourier transform (IDFT) with an $\ell_1$-penalty term, reformulated to be solvable using a primal-dual interior point method (IPM).
Although Krylov methods are not typically used to solve Karush-Kuhn-Tucker (KKT) systems arising in IPMs due to their ill-conditioning, we employ a tailored preconditioner and establish new asymptotic bounds on the condition number of preconditioned KKT matrices.
Thanks to this dedicated preconditioner --- and the fact that FFT and IFFT operate as linear operators without requiring explicit matrix materialization --- KKT systems can be solved efficiently at large scales in a matrix-free manner.
Numerical results from a Julia implementation leveraging GPU-accelerated interior point methods, Krylov methods, and FFT toolkits demonstrate the scalability of our approach on problems with hundreds of millions of variables, inclusive of real data obtained from the diffuse scattering from a slightly disordered Molybdenum Vanadium Dioxide crystal.
\end{abstract}

\section{Introduction}
\label{sec:introduction}

The discrete Fourier transform (DFT) is a fundamental tool for converting signals from the time domain to the frequency domain, revealing their spectral content.
Its applications span audio and video processing, GPS, and medical imaging.
The fast Fourier transform (FFT) algorithm efficiently computes the DFT, enabling its use in large-scale problems.
When a signal’s energy is concentrated in a few frequencies, the resulting DFT has a sparse structure that can be exploited:
The sparse Fast Fourier transform (sFFT), introduced by \cite{Hassanieh2012Simple}, computes the sparse DFT more efficiently than FFT. Noisy signals present additional challenges, as energy leakage can occur in the decomposition.
When the signal-to-noise ratio is sufficient, most energy remains concentrated in a few dominant frequencies.
The sFFT can approximate sparse DFTs under such conditions, with theoretical guarantees for error bounds \cite{Hassanieh2012Nearly}.

This work extends the sFFT framework to handle the more complex setting of noisy signals with missing values, a problem that is particularly relevant in X-ray image processing for crystals.
A key motivation for this work lies in materials science, particularly in the study of order-disorder transitions in crystals.
In that case, the useful signal, the "disorder", is small and masked by the rest of the structure which is still primarily crystalline. Therefore the X-ray diffraction from such a structure will not have useful signal around the Bragg peaks of the unperturbed structure and needs to be removed before attempting to reconstruct the features of the disorder.
The missing signal values at those locations implies that the inversion of the disorder is underdetermined.
These investigations often focus on analyzing the local structure of crystals using the total pair distribution function (PDF), which is the Fourier transform of the total scattering intensity \cite{Takeshi2012Structure}.
For diffraction experiments on single crystals, this is referred to as the total three-dimensional PDF (3D-PDF) \cite{Weber2012three, Welberry1994Interpretation}, providing detailed information about interatomic vectors.

While single crystal diffraction is more experimentally challenging than powder diffraction, it yields three-dimensional diffraction data.
Unlike the powder PDF, which describes the distribution of distances between atom pairs, the 3D-PDF provides full information about interatomic vectors.
The 3D-$\Delta$PDF isolates disorder-related features by removing Bragg peaks that represent the average structure, leaving only information about structural deviations \cite{Weber2012three}.
Recovering the 3D-$\Delta$PDF from total scattering data is inherently challenging, as Bragg peaks must be accurately subtracted and the remaining data reconstructed with minimal artifacts.
Earlier methods, such as the punch-and-fill algorithm \cite{kobas2005structural}, including our own innovation such as the Julia package \texttt{LaplaceInterpolation.jl} \cite{RH2022},
relies on interpolation to address this problem.
However, these approaches often introduced sharp edges, leading to unintended ripples or noise in the resulting PDF (see \S\ref{sec:real_datasets} for more details).

The 3D-$\Delta$PDF is inherently sparse, with nonzero values appearing only at discrete features encoding pairwise interatomic vector probabilities.
We formulate the signal recovery as a regularized least-squares problem with an $\ell_1$-norm penalty to enforce sparsity.
Our approach reduces interpolation artifacts such as ripples and yields a reconstruction that is more consistent with physical interpretations.
It leverages DFT sparsity by employing an $\ell_1$ penalty, a computationally feasible relaxation of the $\ell_0$ constraint.
It promotes sparsity while minimizing the squared error between the observed and predicted signals.
The reconstruction process uses only available data plus its sparsity structure, ensuring more accurate signal recovery compared to the punch-and-fill method which needs to impute the signal through interpolation in a large image area.
Since the diffraction image is three dimensional \cite{Welberry1994Interpretation} the problem is much larger than compressed sensing over the much more common two dimensional images and routinely results in several hundred million voxels. Moreover, continuous improvements in sensor technology (for example, the advanced photon source (APS) upgrade) will increase the number of voxels significantly \cite{streiffer2015early}. 
To simplify computations, the problem is reformulated in the real domain through one-to-one mappings, which eliminate the need to explicitly manage DFT conjugate symmetry constraints.
This transformation reformulates the problem as a LASSO problem, a well-established framework for sparse regression.
It also enables the use of real FFT and IFFT, which are computationally more efficient in terms of storage and computation.

There exist numerous first-order methods to solve the compressed sensing problem: Examples include gradient project sparse reconstruction problem \cite{Figueiredo_2007}, proximal forward-backward splitting method~\cite{combettes2005signal}, Nesterov’s algorithm \cite{Becker_2011}, fixed point continuation active set \cite{Wen_2010}, and alternating direction method of multipliers (ADMM) \cite{boyd2011distributed}. While first-order methods are widely used due to their simplicity and scalability, the second-order
methods benefit from stronger convergence guarantees \cite{candes2005l1} and may be very competitive
with first-order methods provided the required linear algebra can run efficiently on the target architectures \cite{Fountoulakis_2013}.
For that reason,  we employ a primal-dual IPM \cite{wright1997primal},
in line with the methods proposed earlier in \cite{Kim2007Interior,saunders2002pdco}.
IPM requires solving a sequence of Newton linear systems, which can be computationally expensive.
In particular, the Newton linear systems become ill-conditioned close to convergence, making it increasingly challenging to solve the linear systems using iterative methods. As a consequence, the linear systems in IPMs are solved primarly with direct linear algebra \cite{wright2001effects,gondzio2012interior}.
Several iterative approaches have been proposed~\cite{Gondzio_2010,gondzio2012interior}, but all require an efficient preconditioner to be tractable.
Fortunately, provided such a preconditioner exists, the matrix-free IPM variant has a structure favorable for GPU acceleration~\cite{Smith_2012}.
Here, the large-scale dimension of the compressed sensing problem anyway prohibits the use of direct solvers in IPM, leaving
the matrix-free variant as the only tractable option.
The authors in \cite{saunders2002pdco} are using a generic IPM method to solve
compressed sensing problems, using \textsc{LSQR} \cite{paige-saunders-1982} to solve the Newton systems.
In a direction very similar to our work, \cite{Fountoulakis_2013} also proposed an IPM where the Newton systems are solved using preconditioned \textsc{CG}: by
using the restricted isometry property (RIP), they proved the
eigenvalues of the preconditioned Newton matrix are clustered around one, allowing \textsc{CG} to converge in few iterations. For a different regularization than the one discussed here,
we note that \cite{ghannad-orban-saunders-2022} proposed a version of an interior-point method whose Newton system matrix has a bounded condition number and can be in principled be approached with preconditioned Krylov methods as we propose here as well.

Several packages exist for solving interior-point method (IPM) problems, but few offer support for matrix-free linear solvers.
This limitation applies to libraries such as \texttt{IPOPT} \cite{W_chter_2005} and \texttt{KNITRO} \cite{byrd2006k}.
In contrast, the package \texttt{MadNLP.jl} \cite{shin2023accelerating} supports various KKT formulations and matrix-free linear solvers, enabling users to choose solvers tailored to their problem structure.

Given the computational demands of high-resolution 3D-$\Delta$PDF recovery, we accelerate the process using GPUs and provide a Julia implementation. Our code relies on primal-dual IPM (\texttt{MadNLP.jl}), Krylov solvers (\texttt{Krylov.jl}), and FFT toolkit (\texttt{cuFFT}): all packages are optimized to leverage GPU parallelism, ensuring scalability for problems involving hundreds of millions of variables. We outline the contributions of the paper below.

\section*{Contributions}\label{sec:contributions}
This paper introduces a scalable method for sparse discrete Fourier transform (sDFT) recovery under noisy and incomplete conditions, with the following key contributions:
\begin{itemize}
    \item \textbf{Sparse DFT formulation}: Reformulation of the recovery problem as an $\ell_1$-regularized least-squares optimization in the real domain where it is sparse.
    \item \textbf{New analysis framework:} We introduce a new framework in \S \ref{sec:formulation} for analyzing the preconditioned CG steps for a class of primal-dual IPMs and bounding the condition number of the iteration matrix on one of the standard IPM central-path neighborhoods. When applied to the problem at hand it produces estimates comparable to \cite{Fountoulakis_2013} for the same problem class. However, as we will explain in \S \ref{sec:formulation}, our results rely only on classical estimates for interior point methods \cite{wright2001effects} which makes them applicable in principle beyond the cases discussed here, including to $\ell_1$-regularized quadratic programming. We discuss this issue in detail in Remark \ref{r:jacek}.
    \item \textbf{Matrix-free KKT operators}: Development of matrix-free KKT operators for each iteration of the interior-point method (IPM).
    The approach leverages tailored preconditioners, Krylov subspace methods, and FFT-based techniques to efficiently solve large-scale systems.
    \item \textbf{GPU-accelerated implementation}: Building on our previous work with Julia-based GPU-accelerated libraries such as \texttt{MadNLP.jl} and \texttt{Krylov.jl}, we have developed \texttt{CompressedSensingIPM.jl}.
    This package provides an interface specifically designed to simplify the resolution of compressed sensing problems, while leveraging the capabilities of these high-performance libraries. We will demonstrate this package on problems with hundreds of millions of variables in \S\ref{sec:num-results}.
    \item \textbf{Applications in crystallography}: The approach helps understand the order-disorder transitions in materials by mitigating artifacts from traditional interpolation methods in X-ray crystallography. We demonstrate this for accurate recovery of 3D-$\Delta$PDFs on real X-ray crystallographical data from a slightly disordered Molybdenum Vanadium Dioxide crystal in \S \ref{sec:applications}.
\end{itemize}

\section*{Summary}\label{sec:summary}
The remainder of this paper is organized as follows.
In \S\ref{sec:notations}, we introduce the notations.
\S\ref{sec:formulation} details the formulation of the optimization problem, outlines the algorithmic framework, and provides theoretical guarantees for the proposed algorithm.
\S\ref{sec:applications} describes the application problems and the datasets used.
\S\ref{sec:num-results} presents numerical results, demonstrating the scalability and performance of our approach.
Finally, \S\ref{sec:conclusions} summarizes our contributions and discusses potential future directions.

\section{Notations}\label{sec:notations}
Vectors are represented by bold-faced lower-case letters (for instance, $\boldsymbol{x}$). Scalars are represented by lower-case letters (for instance, $n$).
We denote $\boldsymbol{e} = \bmat{1, \cdots, 1}^\top$ the vector of all ones and $\boldsymbol{0} = \bmat{0, \cdots, 0}^\top$ the vector of all zeros.
The indexing for a vector $\boldsymbol{x}$ of length $n$ starts from 0 and ends at $n-1$, i.e. $\boldsymbol{x} = \bmat{\boldsymbol{x}_0,\dots,\boldsymbol{x}_{n-1}}^{\top}$.
We use a subscript to denote an element of a vector, i.e. $\boldsymbol{x}_i$. For a vector $\boldsymbol{x}$, $\diag(\boldsymbol{x})$ represents a diagonal matrix with the $i^{th}$ diagonal entry being $\boldsymbol{x}_i$.
$[n]$ represents the set $\{0,1,\dots,n-1\}$. $\boldsymbol{x}_{1:\frac{n}{2}-1}$ is a column vector $\bmat{\boldsymbol{x}_1,\dots,\boldsymbol{x}_{\frac{n}{2}-1}}^\top$.
We denote $\mathcal{M}$ as a subset of the set $[n]$, and $[n]\backslash \mathcal{M}$ denotes the complement set of $\mathcal{M}$ in $[n]$.
$|\mathcal{A}|$ represents the cardinality of set $\mathcal{A}$. For a vector $\boldsymbol{x}$ of length $n$ and $\mathcal{A}\subseteq [n]$, $\boldsymbol{x}_{\mathcal{A}}$ refers to the subvector of $\boldsymbol{x}$ indexed by the set $\mathcal{A}$.
Matrices are represented by upper-case letters (such as, $A$).
If $A$ is a $n\times n$ matrix, then $A_j$ means the $j^{th}$ row of $A$. For two sets $\mathcal{A}, \mathcal{B}\subseteq [n]$, $A_{\mathcal{A}}$ is a submatrix of $A$ containing all the rows indexed by $\mathcal{A}$ and $A_{\mathcal{A}\times \mathcal{B}}$ is a submatrix of $A$ containing rows indexed by $\mathcal{A}$ and columns indexed by $\mathcal{B}$.
We denote $I$ as identity matrix and $0$ as the matrix of all zeros.
Let $a$ be a complex number.
$\overline{a}$ means the conjugate of $a$.
$Re(a)$ refers to the real part of $a$, $Im(a)$ refers to the imaginary part of $a$ and $|a|=\sqrt{Re(a)^2 + Im(a)^2}$.
We note $\boldi = \sqrt{-1}$ the imaginary number.
$\|\boldsymbol{x}\|_1 = \sum_{i=0}^{n-1}|\boldsymbol{x}_i|$ and $\|\boldsymbol{x}\|_2 = \sqrt{\sum_{i=0}^{n-1}|\boldsymbol{x}_i|^2}$ define the usual vectorial norms.
For two vectors $\boldsymbol{x}, \boldy\in\mathbb{R}^n$, $\boldsymbol{x}\geq (>) \boldy$ means $\boldsymbol{x}_i\geq (>) \boldy_i$ for any $i\in [n]$. For two matrices $A, B\in\mathbb{R}^{n\times n}$, $A\succ 0$ means $A$ is a positive definite matrix and $A\succ B$ means $A-B\succ 0$.
For two scalars $a$ and $b$, we say $a = \Theta(b)$ if there exist constants $c_1$ and $c_2$ such that $c_1 b\leq a \leq c_2 b$.

\section{Problem formulation}\label{sec:formulation}
In this section, we detail the formulation of the compressed sensing problem.
We aim at improving the resolution of a signal with missing values, under the assumption that its DFT is sparse.
For simplicity, we focus the presentation of the abstraction on one-dimensional data of even length.
The transformation between a real signal \(\boldsymbol{x} \in \mathbb{R}^n\) and its DFT \(\boldv \in \mathbb{C}^n\) is given by
\begin{equation*}
    \boldv = DFT(\boldsymbol{x}) \quad \iff \quad \boldsymbol{x} = IDFT(\boldv).
\end{equation*}
The DFT is defined as
\begin{equation*}\label{FTdef}
    \boldv_{k} = \frac{1}{\sqrt{n}}\sum_{j=0}^{n-1}\omega^{jk}\boldsymbol{x}_j,\quad\text{for}\quad k = 0,\dots,n-1 \;,
\end{equation*}
where $\omega = e^{-\boldi\frac{2\pi}{n}}$. The IDFT is defined as
\begin{equation*}\label{FTdef2}
    \boldsymbol{x}_{k} = \frac{1}{\sqrt{n}}\sum_{j=0}^{n-1}\overline{\omega}^{jk}\boldv_j,\quad \text{for}\quad k = 0,\dots,n-1.
\end{equation*}
The DFT and IDFT can be represented as linear operators using the following unitary matrix:
$$
C = \frac{1}{\sqrt{n}}
\bmat{
   1      & 1            & 1             & \cdots & 1
\\ 1      & \omega       & \omega^2      & \cdots & \omega^{n-1}
\\ 1      & \omega^2     & \omega^4      & \cdots & \omega^{2n-2}
\\ \vdots & \vdots       & \vdots        & \ddots & \vdots
\\ 1      & \omega^{n-1} & \omega^{2n-2} & \cdots & \omega^{(n-1)^2}
}.
$$
Using this Vandermonde matrix, the transformations can be compactly expressed as
$\boldv = C \boldsymbol{x}$ and $\boldsymbol{x} = C^H \boldv$, where $C^H$ denotes the conjugate transpose of $C$.

Since the signal \(\boldsymbol{x}\) is real, its DFT \(\boldsymbol{v}\) must be conjugate symmetric by definition, which means \(\boldsymbol{v}\) lies in the subspace
\[
\mathcal{F} = \left\{\boldsymbol{v} \in \mathbb{C}^n \mid \boldsymbol{v}_0 \in \mathbb{R}; \, \boldsymbol{v}_{\frac{n}{2}} \in \mathbb{R}; \, \boldsymbol{v}_k = \overline{\boldsymbol{v}_{n-k}} \text{ for } k = 1, \dots, \frac{n}{2}-1 \right\}.
\]
If the length of the signal \(\boldsymbol{x}\) is odd, the DFT \(\boldsymbol{v}\) satisfies
\[
\boldsymbol{v} \in \mathcal{F} := \left\{\boldsymbol{v} \in \mathbb{C}^n \mid \boldsymbol{v}_0 \in \mathbb{R}; \, \boldsymbol{v}_k = \overline{\boldsymbol{v}_{n-k}} \quad \text{for} \quad k = 1, \dots, \frac{n+1}{2}\right\}.
\]
We discuss the odd case here only to point out the slight difference of the subspace definition from the even case; in the sequel we discuss the even case only.

In practice, the observed signal $\boldsymbol{x}$ is not only noisy but also incomplete.
We assume the noisy signal $\widetilde{\boldb}$ follows an additive noise model $\boldsymbol{\widetilde{b}} = \boldsymbol{x}+ \boldsymbol{\widetilde{e}}$, where $\boldsymbol{x}$ represents the true signal, $\boldsymbol{\widetilde{e}}$ denotes the noise. In addition, the noisy signal $\widetilde{\boldb}$ also has missing values. Let $\mathcal{M}$ denote the set of indices corresponding to the missing entries. To simplify notation, we denote the observed signal 
as $\boldb:= \widetilde{\boldb}_{[n]\backslash\mathcal{M}}$, where $\boldb$ contains only the observed values from the noisy signal $\widetilde{\boldb}$.
A common approach to recover the DFT $\boldv$ is by solving the constrained least-squares problem in complex numbers
\begin{equation}\label{complexopt}
    \min_{\boldv \in \mathbb{C}^n}\left\|\boldb - \left(IDFT(\boldv)\right)_{[n]\backslash\mathcal{M}}\right\|^2\quad\text{subject to}\quad\boldv\in \mathcal{F}.
\end{equation}
This optimization problem is challenging due to the feasibility constraint $\boldv\in \mathcal{F}$.
To address this issue, we introduce two one-to-one mappings.
It helps transform \eqref{complexopt} into the real domain and eliminate the feasibility constraint.
The first maps the DFT $\boldv \in \mathbb{C}^n$ to a real vector $\boldbeta\in\mathbb{R}^n$, defined as
\begin{equation}\label{beta}
    \boldbeta = [\boldv_0,~\boldv_{\frac{n}{2}},~\sqrt{2}Re(\boldv_{1:\frac{n}{2}-1}),~\sqrt{2}Im(\boldv_{1:\frac{n}{2}-1})]^{\top}.
\end{equation}
We can also express this mapping as a matrix-vector product $\boldbeta = B\boldv$, where
$$
B = \bmat{
   1 &                    &          &                    &   &                   &         &
\\   &                    &          &                    & 1 &                   &         &
\\   & \phantom{-\boldi}\coefB &          &                    &   &                   &         & \phantom{\boldi}\coefB
\\   &                    &  \ddots  &                    &   &                   & \iddots &
\\   &                    &          & \phantom{-\boldi}\coefB &   & \phantom{\boldi}\coefB &         &
\\   &                    &          & -i\coefB           &   & i\coefB           &         &
\\   &                    &  \iddots &                    &   &                   & \ddots  &
\\   & -i\coefB           &          &                    &   &                   &         & i\coefB
} \in \mathbb{C}^{n \times n}.
$$
Its inverse mapping is given by
\begin{equation}\label{v}
\begin{aligned}
    \boldv_0 &= \boldbeta_0,\quad \boldv_{\frac{n}{2}} = \boldbeta_1, \\
    \boldv_k &= \tfrac{1}{\sqrt{2}}(\boldbeta_{k+1} + \boldi\cdot\boldbeta_{k+\frac{n}{2}})\quad \text{for}\quad k=1,\dots,\tfrac{n}{2}-1,\\
    \boldv_{k} &= \overline{\boldv_{n-k}}\quad \text{for}\quad k=\tfrac{n}{2}+1,\dots,n-1.
\end{aligned}
\end{equation}
Because the matrix $B$ is unitary ($BB^{H} = B^{H\!}B= I_n$), the inverse mapping can also be expressed as a matrix-vector product $\boldv = B^{H\!}\boldbeta$, where $B^{H}$ is the adjoint of $B$.
The mappings ensure $IDFT(\boldv) = A\boldbeta$ where $A \in \mathbb{R}^{n \times n}$ is a real orthogonal matrix.
To verify this, we observe that the $k^{th}$ component of $IDFT(\boldv)$ satisfies
\begin{equation}\label{map}
    \begin{aligned}
        x_k &= \tfrac{1}{\sqrt{n}}{\textstyle\sum_{j=0}^{n-1}}\overline{\omega}^{jk}\boldv_j\\
            &= \tfrac{1}{\sqrt{n}}\left(\boldv_0 + (-1)^k \boldv_{\frac{n}{2}} +{\textstyle\sum_{j=1}^{\frac{n}{2}-1}}\overline{\omega}^{jk}\boldv_j + {\textstyle\sum_{j=\frac{n}{2}+1}^{n-1}}\overline{\omega}^{jk}\boldv_j\right)\\
            &= \tfrac{1}{\sqrt{n}}\left(\boldv_0 + (-1)^k \boldv_{\frac{n}{2}} +{\textstyle\sum_{j=1}^{\frac{n}{2}-1}}\overline{\omega}^{jk}\boldv_j + {\textstyle\sum_{j=\frac{n}{2}+1}^{n-1}}\overline{\omega}^{jk}\overline{\boldv_{n-j}}\right)\\
            &= \tfrac{1}{\sqrt{n}}\left(\boldv_0 + (-1)^k \boldv_{\frac{n}{2}} +{\textstyle\sum_{j=1}^{\frac{n}{2}-1}}\overline{\omega}^{jk}\boldv_j + {\textstyle\sum_{j=1}^{\frac{n}{2}-1}}\overline{\omega}^{(n-j)k}\overline{\boldv_j}\right)\\
            &= \tfrac{1}{\sqrt{n}}\left(\boldv_0 + (-1)^k \boldv_{\frac{n}{2}} +{\textstyle\sum_{j=1}^{\frac{n}{2}-1}}\overline{\omega}^{jk}\boldv_j + {\textstyle\sum_{j=1}^{\frac{n}{2}-1}}\omega^{jk}\overline{\boldv_j}\right)\\
            &= \tfrac{1}{\sqrt{n}}\left(\boldv_0 + (-1)^k \boldv_{\frac{n}{2}} +{\textstyle\sum_{j=1}^{\frac{n}{2}-1}}2Re(\overline{\omega}^{jk}\boldv_j)\right)\\
            &= \tfrac{1}{\sqrt{n}}\left(\boldv_0 + (-1)^k \boldv_{\frac{n}{2}} +{\textstyle\sum_{j=1}^{\frac{n}{2}-1}}2Re(\overline{\omega}^{jk})Re(\boldv_j) -{\textstyle\sum_{j=1}^{\frac{n}{2}-1}} 2Im(\overline{\omega}^{jk})Im(\boldv_j)\right)\\
            &= \tfrac{1}{\sqrt{n}}\left(\boldbeta_0 + (-1)^k \boldbeta_1 +{\textstyle\sum_{j=1}^{\frac{n}{2}-1}}\sqrt{2} Re(\overline{\omega}^{jk})\boldbeta_{j+1} -{\textstyle\sum_{j=1}^{\frac{n}{2}-1}} \sqrt{2}Im(\overline{\omega}^{jk})\boldbeta_{j+\tfrac{n}{2}}\right).\\
            &= A_k\boldbeta.
    \end{aligned}
\end{equation}
$A$ is a real matrix because, for any row $k$, the coefficients of $\boldbeta$ are multiplied by real coefficients. Since $A = C^{H\!} B^H$, where $B$ and $C$ are unitary matrices, $A$ is orthogonal.
We construct analogous mappings such that $\text{IDFT}(\boldv) = A \boldbeta$ for both the two- and three-dimensional problems.
A detailed explanation of these mappings is available in the documentation of the Julia package \texttt{CompressedSensingIPM.jl}.


Next, we define two submatrices of $A$ encoding the missing values
\begin{equation}\label{MMperp}
    M = A_{\mathcal{M}}\text{ and }M_{\perp} = A_{[n]\backslash\mathcal{M}}.
\end{equation}
The complex optimization problem \eqref{complexopt} becomes equivalent to an unconstrained quadratic optimization problem
\begin{equation*}
    \min_{\boldbeta\in\mathbb{R}^n}\left\|\boldb - M_{\perp}\boldbeta\right\|^2.
\end{equation*}
We observe that, with our mappings, sparsity in $\boldv$ induces sparsity in $\boldbeta$, and vice versa.
This mutual relationship ensures that imposing a sparse structure on one of these variables automatically constrains the other to have a matching sparsity pattern.
To enforce the sparse structure in the solution $\boldbeta$, we introduce an $\ell_1$ penalty term on $\boldbeta$ to derive a least absolute shrinkage and selection operator (LASSO) problem
\begin{equation}\label{eq:realopt}
    \boldsymbol{\beta^{\star}} = \underset{\boldbeta\in\mathbb{R}^n}{\arg\min}~\tfrac{1}{2}\|\boldb-M_{\perp}\boldbeta\|^2_2~+~\lambda\|\boldbeta\|_1,
\end{equation}
where $\lambda\in\mathbb{R}_+$ is the penalty parameter.

\subsection{Matrix-free operators}

While the dense matrix $M_{\perp}$ can be directly defined from $A$, it is unnecessary to explicitly store \( M_{\perp} \) in practice.
The computational cost and memory requirements of a dense matrix-vector multiplication are significantly higher than those of an operator-vector product, where the operator is implemented as an FFT call combined with a mapping.
For \( \boldbeta\in\mathbb{R}^n \) and \( \boldb\in\mathbb{R}^{n-|\mathcal{M}|} \), the matrix-vector multiplications \( M_{\perp}\boldbeta \) and \( M_{\perp}^\top \boldb \) can be efficiently computed using real-to-complex DFT and complex-to-real IDFT operations, combined with the appropriate mapping.
This approach significantly reduces memory usage and improves efficiency due to the \( \mathcal{O}(n \log n) \) complexity of FFT-based computations.
For $M_{\perp}\boldbeta$, using the mapping in \eqref{map}, we have
\begin{equation}
    M_{\perp}\boldbeta = (A\boldbeta)_{[n]\backslash\mathcal{M}} = (IDFT(\boldv))_{[n]\backslash\mathcal{M}},
\end{equation}
where $\boldv$ is mapped from $\boldbeta$ via \eqref{v}. For $M_{\perp}^\top \boldb$, the orthogonality of $A$ ensures the existence of a unique vector $\boldxi\in \mathbb{R}^{n}$ such that
\begin{equation}\label{xi}
    M_{\perp}\boldxi = \boldb, \quad M\boldxi = \boldsymbol{0}.
\end{equation}
Moreover, by the definition of $M$ and $M_{\perp}$ in \eqref{MMperp}, we observe that
\begin{equation}
    \boldxi = I_n \boldxi = A^{\top\!\!} A \boldxi = M_{\perp}^{\top\!} M_{\perp}\boldxi +  M^{\top\!}M\boldxi = M_{\perp}^\top \boldb + M^{\top}\boldsymbol{0} = M_{\perp}^\top \boldb.
\end{equation}
Thus, the problem is transferred to computing $\boldxi$.
To achieve this, we construct a vector $\widehat{\boldb}\in\mathbb{R}^n$ with $\widehat{\boldb}_{\mathcal{M}}=\boldzero$ and $\widehat{\boldb}_{[n]\backslash\mathcal{M}} = \boldb$. We further note that (\ref{xi}) is equivalent to
\begin{equation}
    \boldxi = A^{\top}\widehat{\boldb} = DFT(\boldu),
\end{equation}
where $\boldu\in \mathcal{F}$ is mapped from the real vector $\widehat{\boldb}$ following the mapping defined in \eqref{v}.

\subsection{Karush-Kuhn-Tucker stationary conditions}
\label{sec:ipm}
We linearize the $\ell_1$ penalty in the problem~\eqref{eq:realopt} using an elastic reformulation.
It amounts to introduce additional decision variables $\boldsymbol{z} \in \mathbb{R}^n$,
solution of the new optimization problem:
\begin{equation}
  \label{eq:problem}
  \begin{aligned}
    \min_{\boldbeta \in \mathbb{R}^{n}, \boldz \in \mathbb{R}^{n}} \; & \tfrac{1}{2}\|\boldb - \Mperp \boldbeta \|^2 + \lambda \boldsymbol{e}^{\top\!} \boldz \\
    \text{subject to} \; & - \boldz \leq \boldbeta \leq \boldz.
  \end{aligned}
\end{equation}
We rewrite the problem~\eqref{eq:problem} in standard form
by introducing two positive slack variables $\bolds_1, \bolds_2 \in \mathbb{R}^{n}_+$.
Problem~\eqref{eq:problem} becomes equivalent to the quadratic program (QP):
\begin{equation}
  \label{eq:slackproblem}
  \begin{aligned}
    \min_{\boldbeta, \boldz, \boldsymbol{s}_1, \boldsymbol{s}_2} \; & \tfrac{1}{2}\|\boldb - \Mperp\boldbeta\|^2~+~\lambda \boldsymbol{e}^{\top\!} \boldz \; , \\
    \text{subject to} \quad & \boldz + \boldbeta - \bolds_1 = \boldzero \; , \\
                            & \boldz - \boldbeta - \bolds_2 = \boldzero \; ,  \\
                            & (\bolds_1, \bolds_2) \geq \boldzero \; .
  \end{aligned}
\end{equation}
We denote by $\bolds = (\bolds_1, \bolds_2)$ the slack variables,
$\boldy = (\boldy_1, \boldy_2) \in \mathbb{R}^{2n}$ the
multipliers associated to the two equality constraints, and $\boldnu = (\boldnu_1, \boldnu_2) \in \mathbb{R}^{2n}$
the multipliers associated to the two bound constraints.
We define the Lagrangian of \eqref{eq:slackproblem} as:
\begin{multline}
  \label{eq:lagrangian}
  L(\boldbeta, \boldz, \bolds, \boldy, \boldnu)\\
  =
  \tfrac{1}{2}
  \|\boldb - \Mperp\boldbeta\|^2 + \lambda \boldsymbol{e}^{\top\!} \boldz
  + \boldy_1^\top (\bolds_1 - \boldbeta - \boldz)
  + \boldy_2^\top (\bolds_2 - \boldz + \boldbeta)
  -\boldnu_1^\top \bolds_1 - \boldnu_2^\top \bolds_2 \; .
\end{multline}

The KKT conditions of problem \eqref{eq:slackproblem} are
\begin{equation}
  \label{eq:kkt}
  \begin{aligned}
    & \Mperp^\top (\Mperp\boldbeta - \boldb) - \boldy_1 + \boldy_2 = \boldzero \\
    & \lambda \bolde - \boldy_1 - \boldy_2 = \boldzero \\
    & \boldy_1 - \boldnu_1 = \boldzero \\
    & \boldy_2 - \boldnu_2 = \boldzero \\
    &\boldz + \boldbeta  - \bolds_1 = \boldzero \\
    &\boldz -\boldbeta - \bolds_2 = \boldzero \\
    & \mathbf{0} \leq \bolds_1 \perp \boldnu_1 \geq \boldzero \\
    & \mathbf{0} \leq \bolds_2 \perp \boldnu_2 \geq \boldzero
  \end{aligned}
\end{equation}
As the Problem~\eqref{eq:slackproblem} is convex, the primal-dual solution
$(\boldbeta^\star, \boldz^\star, \bolds^\star, \boldy^\star, \boldnu^\star)$
is an optimal solution of \eqref{eq:slackproblem} if and only if it satisfies
the KKT conditions~\eqref{eq:kkt}.

We analyze further the well-posedness of \eqref{eq:kkt} by looking at the constraints
qualifications. We note $g(\boldbeta, \boldz, \bolds) := (\boldz + \boldbeta - \bolds_1, \boldz - \boldbeta - \bolds_2)^\top$
the equality constraints, and $h(\boldbeta, \boldz, \bolds) := s$ the inequality constraints.
The active set is defined as
\begin{equation}
    \mathcal{B} = \{ i \in [2n] \; | \; s_i = 0 \} \; ,
\end{equation}
and we note $h_\mathcal{B}(\cdot)$ the vector containing the components  of $h(\cdot)$ for $i \in \mathcal{B}$.
The active set satisfying strict complementarity is defined as: $\mathcal{B}_+ = \{ i \in \mathcal{B} \; | \; \nu_i > 0 \; \text{for some} \; \boldnu \;
\text{satisfying \eqref{eq:kkt}} \}$.

Observe that if both $s_{1i} = 0$ and $s_{2i} = 0$ hold, then the constraints imply $\beta_i = z_i = 0$ and the variable
$\beta_i$ becomes inactive.  A contrario, we define the set of active variables:
\begin{equation}
\label{eq:activevariables}
    \mathcal{A} = \{ i \in [n] \; | \; |\beta_i| > 0 \} \; .
\end{equation}
Observe that $z_i >0$ for all $i \in \mathcal{A}$. We note the active submatrix keeping only
the active columns in $\Mperp$ as $N_\mathcal{A} := (\Mperp)_{:,\mathcal{A}}$.

\begin{assumption}[Linear independence constraint qualification (LICQ)]
\label{assum:LICQ}
We say that the linear independence constraint qualification (LICQ) holds at $(\boldbeta, \boldz, \bolds)$ if
the set of active constraint gradients $\{\nabla g_i(\boldbeta, \boldz, \bolds) \}_{i \in [2n]} \cup \{\nabla h_i(\boldbeta, \boldz, \bolds) \}_{i \in \mathcal{B}}$ is linearly independent.
\end{assumption}
The condition LICQ ensures that there exists an unique dual multiplier $(\boldy, \boldnu)$ satisfying
the KKT conditions \eqref{eq:kkt} at the solution. Similarly, the following condition gives the
uniqueness of the primal solution.

\begin{assumption}[Second-order sufficient condition (SOSC)]
\label{assum:SOSC}
We say that the second-order sufficient condition (SOSC) holds at $(\boldbeta, \boldz, \bolds)$ if
for all multipliers $(\boldy, \boldnu)$ satisfying \eqref{eq:kkt} there exists a constant
$\alpha > 0$ such that
\begin{subequations}
\label{eq:sosc}
\begin{equation}
\label{eq:ssosc}
   \boldd^\top \big\{ \nabla^2_{(\boldbeta, \boldz, \bolds)} L(\boldbeta, \boldz, \bolds, \boldy, \boldnu)\big\} \boldd \geq \alpha \|\boldd \|^2 \; ,
\end{equation}
for all directions $\boldd \in \mathbb{R}^{4n}$ satisfying
\begin{equation}
\label{eq:criticalcone}
  \begin{aligned}
    & \nabla g_i(\boldbeta, \boldz, \bolds)^\top \boldd  = 0 \quad \forall i \in [2n]\;, \\
    & \nabla h_i(\boldbeta, \boldz, \bolds)^\top \boldd  = 0 \quad \forall i \in \mathcal{B}_+ \; , \\
    & \nabla h_i(\boldbeta, \boldz, \bolds)^\top \boldd  \geq 0 \quad \forall i \in \mathcal{B} \setminus \mathcal{B}_+ \; .
  \end{aligned}
\end{equation}
\end{subequations}
\end{assumption}
The strict complementarity condition ensures that $\mathcal{B} = \mathcal{B}_+$.
\begin{assumption}[Strict complementarity condition (SCS)]
\label{assum:SCS}
    We say that the primal-dual solution $(\bolds^\star, \boldnu^\star)$
    satisfies strict complementarity if $\bolds_i^\star + \boldnu_i^\star > 0$ for all $i \in [2n]$.
\end{assumption}

In the following proposition, we prove that the QP~\eqref{eq:slackproblem} satisfies LICQ
and SOSC under certain assumptions.

\begin{proposition}
\label{prop:wellposedness}
The problem~\eqref{eq:slackproblem} satisfies LICQ. If SCS hold and
the active submatrix $N_{\mathcal{A}}$ is full row rank, then
the problem \eqref{eq:slackproblem} also satisfies SOSC.
\end{proposition}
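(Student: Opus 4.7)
The plan is to handle LICQ and SOSC separately, exploiting the affine structure of \eqref{eq:slackproblem} and the sparsity of the Lagrangian Hessian. Since the constraints are affine, the LICQ Jacobian is point-independent. Writing the gradients in the block variables $(\boldbeta,\boldz,\bolds_1,\bolds_2)$, the two equality-constraint blocks read
\begin{equation*}
  \nabla g = \begin{bmatrix} I & I & -I & 0 \\ -I & I & 0 & -I \end{bmatrix},
\end{equation*}
and each active bound constraint $s_i\ge 0$ contributes a canonical basis vector supported on the slack block. LICQ follows by setting a null combination $\alpha_1^\top \nabla g_1 + \alpha_2^\top \nabla g_2 + \sum_{i\in\mathcal{B}} \gamma_i\, \nabla h_i = 0$ and reading off block-by-block: the $\boldbeta$-block gives $\alpha_1=\alpha_2$, the $\boldz$-block gives $\alpha_1=-\alpha_2$, hence $\alpha_1=\alpha_2=0$, and the slack blocks then force $\gamma=0$.

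For SOSC, I first note that $\boldz$ and $\bolds$ appear only linearly in \eqref{eq:lagrangian}, so the Hessian of $L$ in $(\boldbeta,\boldz,\bolds)$ is block-diagonal with $\Mperp^\top\Mperp$ as its unique nonzero block, yielding $\boldd^\top \nabla^2 L\, \boldd = \|\Mperp \boldd_\beta\|^2$. The remaining work is to reduce the critical cone. Split $\mathcal{B}$ into $\mathcal{B}_1,\mathcal{B}_2 \subset [n]$ corresponding to $\bolds_1,\bolds_2$. The stationarity relation $\lambda\bolde=\boldy_1+\boldy_2$ in \eqref{eq:kkt}, combined with $\boldy_1,\boldy_2\ge 0$ and $\lambda>0$, forbids $\boldy_{1,i}$ and $\boldy_{2,i}$ from both vanishing, so by complementarity one cannot have $s_{1,i}>0$ and $s_{2,i}>0$ simultaneously. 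Together with $\bolds_1=\boldz+\boldbeta$ and $\bolds_2=\boldz-\boldbeta$ this enforces $\boldz_i=|\boldbeta_i|$ and produces the dichotomy $i\notin\mathcal{A} \iff i\in\mathcal{B}_1\cap\mathcal{B}_2$, $i\in\mathcal{A}$ with $\boldbeta_i>0 \iff i\in\mathcal{B}_2\setminus\mathcal{B}_1$, and $i\in\mathcal{A}$ with $\boldbeta_i<0 \iff i\in\mathcal{B}_1\setminus\mathcal{B}_2$.

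Under SCS, $\mathcal{B}_+=\mathcal{B}$, so the inequality conditions in \eqref{eq:criticalcone} become equalities $\boldd_{s,i}=0$ for every $i\in\mathcal{B}$. Combined with $\boldd_\beta+\boldd_z-\boldd_{s_1}=0$ and $-\boldd_\beta+\boldd_z-\boldd_{s_2}=0$, the three-way case analysis forces $\boldd_{\beta,i}=\boldd_{z,i}=0$ for $i\notin\mathcal{A}$, imposes $\boldd_{z,i}=\mathrm{sign}(\boldbeta_i)\,\boldd_{\beta,i}$ for $i\in\mathcal{A}$, and pins down $\boldd_{s_1},\boldd_{s_2}$ from $(\boldd_\beta,\boldd_z)$. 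Setting $\boldu := \boldd_{\beta,\mathcal{A}}$, an index-wise sum then gives $\|\boldd\|^2 = 6\|\boldu\|^2$ and $\boldd^\top\nabla^2 L\,\boldd = \|N_{\mathcal{A}}\boldu\|^2$, so the rank assumption on $N_{\mathcal{A}}$ yields $\|N_{\mathcal{A}}\boldu\|^2\ge \sigma_{\min}^2(N_{\mathcal{A}})\|\boldu\|^2$, which is precisely \eqref{eq:ssosc} with $\alpha = \sigma_{\min}^2(N_{\mathcal{A}})/6>0$.

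The main obstacle is the careful case-by-case collapse of the critical cone onto the single parameter $\boldu$: once SCS turns every active inequality into an equality, the active structure at the optimum pins $\boldd_z$ to $\boldd_\beta$ and kills both components off $\mathcal{A}$, after which positive definiteness of the reduced quadratic form is equivalent to the stated rank condition on $N_{\mathcal{A}}$.
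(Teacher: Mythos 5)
Your proposal is correct and follows essentially the same route as the paper's proof: LICQ from the full row rank of the affine constraint Jacobian, and SOSC by using SCS to collapse the critical cone so that $\boldd_\beta$ is supported on $\mathcal{A}$, $\|\boldd\|^2$ is a fixed multiple of $\|(\boldd_\beta)_\mathcal{A}\|^2$, and the rank condition on $N_\mathcal{A}$ gives the curvature bound. Your version is slightly more explicit (the dichotomy $z_i=|\beta_i|$ and the constant $\alpha=\sigma_{\min}^2(N_\mathcal{A})/6$), but the argument is the same.
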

\begin{proof}
The constraints of \eqref{eq:slackproblem} are affine, and their Jacobian is
\begin{equation}
J = \begin{bmatrix}
   \phantom{-} I & I & \!\!          -  I & \!\! \phantom{-} 0
\\          -  I & I & \!\! \phantom{-} 0 & \!\!          -  I
\\ \phantom{-} 0 & 0 & \!\! \phantom{-} I & \!\! \phantom{-} 0
\\ \phantom{-} 0 & 0 & \!\! \phantom{-} 0 & \!\! \phantom{-} I
\end{bmatrix} \; .
\end{equation}
We observe that the Jacobian $J$ is full row-rank, implying the gradient of the constraints
are linearly independent and LICQ holds.

Let $\boldd = (\boldd_\beta, \boldd_z, \boldd_s) \in \mathbb{R}^{4n}$ satisfying \eqref{eq:criticalcone}. First, note  that
\begin{equation}
\label{eq:soscproof1}
   \boldd^\top \big\{ \nabla^2_{(\boldbeta, \boldz, \bolds)} L(\boldbeta, \boldz, \bolds, \boldy, \boldnu)\big\} \boldd  =
   \boldd_\beta \Mperp^\top \Mperp \boldd_\beta \; .
\end{equation}
Under SCS, the conditions \eqref{eq:criticalcone} imply that
\begin{equation}
\begin{aligned}
& (\boldd_z)_i + (\boldd_\beta)_i - (\boldd_{s_1})_i = 0  \quad \forall i \in [n] \; , \\
& (\boldd_z)_i - (\boldd_\beta)_i - (\boldd_{s_2})_i = 0  \quad \forall i \in [n] \; , \\
& (\boldd_s)_i = 0 \quad \forall i \in \mathcal{B} \; .
\end{aligned}
\end{equation}
Suppose that the variable $i$ is inactive: $i \in [n] \setminus \mathcal{A}$. Then we
have both $\bolds_{1i} = 0$ and $\bolds_{2i} = 0$, implying $(\boldd_{s_1})_i = (\boldd_{s_2})_i = 0$.
Hence, the conditions \eqref{eq:criticalcone} implies that $(\boldd_\beta)_i = 0$ for all $i \in [n] \setminus \mathcal{A}$.
Hence, if $\boldd$ satisfies \eqref{eq:criticalcone}, the equation \eqref{eq:soscproof1} simplifies as
\begin{equation}
\label{eq:soscproof2}
   \boldd^\top \big\{ \nabla^2_{(\boldbeta, \boldz, \bolds)} L(\boldbeta, \boldz, \bolds, \boldy, \boldnu)\big\} \boldd  =
   \boldd_\beta \Mperp^\top \Mperp \boldd_\beta =
   (\boldd_\beta)_\mathcal{A} N_\mathcal{A}^\top N_\mathcal{A} (\boldd_\beta)_\mathcal{A} \; .
\end{equation}
Note $\sigma_1 = \sigma_{min}(N_\mathcal{A})$ the smallest singular value of the active submatrix $N_\mathcal{A}$.
If we suppose $N_\mathcal{A}$ is full row rank, we have $\sigma_1 > 0$, and
\begin{equation}
    \label{eq:soscproof3}
   (\boldd_\beta)_\mathcal{A} N_\mathcal{A}^\top N_\mathcal{A} (\boldd_\beta)_\mathcal{A} \geq \sigma_1^2 \| (\boldd_\beta)_\mathcal{A} \|^2  \; .
\end{equation}
For all $i \in [n] \setminus \mathcal{A}$, we have that $(\boldd_\beta)_i = (\boldd_z)_i =(\boldd_{s_1})_i = (\boldd_{s_2})_i =  0$.
On the active set $i \in \mathcal{A}$, we have either $(\boldd_{s_1})_i = 0$ or $(\boldd_{s_2})_i = 0$.
Thus in turn implies: $|(\boldd_{z})_i| = |(\boldd_{\beta})_i|$.
As $(\boldd_{s_1})_i + (\boldd_{s_2})_i = 2 (\boldd_\beta)_i$, we deduce
that $\| \boldd_z \| = \| \boldd_\beta \|$ and $\| \boldd_s \| = 2 \| \boldd_\beta \|$.
As $\| \boldd_\beta \| = \| (\boldd_\beta)_\mathcal{A} \|$,
we can bound the expression in the right-hand-side of \eqref{eq:soscproof3} below by a multiple of $\| \boldd \|^2$.
Hence concluding the proof.
\end{proof}

Proposition~\ref{prop:wellposedness} requires both SCS and a full-rank active submatrix $N_\mathcal{A}$.
On the one hand, SCS may fail to hold for some values of the penalty term $\lambda$, so this assumption is restrictive.
On the other hand as $\lambda$ is either fixed or sampled; we believe it is reasonable to assume it for the instance of the problem we are solving.
Also, the matrix $N_\mathcal{A}$ is likely to be full row-rank is the solution $\beta$ is very sparse,
meaning we keep only a few columns in $N_\mathcal{A}$.

\subsection{Primal-dual interior-point method}
We solve~\eqref{eq:kkt} using a primal-dual interior-point method.
For a barrier term $\mu > 0$, the KKT equations~\eqref{eq:kkt}
are reformulated as a smooth system of equations using a homotopy method~\cite{wright1997primal}.
For a fixed barrier term $\mu$, the primal-dual interior-point method
solves the following system of nonlinear equations:
\begin{equation}
  \label{eq:kktmu}
  \begin{aligned}
    & \Mperp^\top ( \Mperp\boldbeta - \boldb) - \boldy_1 + \boldy_2 = \boldzero \; ,\\
    & \lambda \bolde - \boldy_1 - \boldy_2 = \boldzero \; ,\\
    & \boldy_1 - \boldnu_1 = \boldzero \; ,\\
    & \boldy_2 - \boldnu_2 = \boldzero \; ,\\
    &\boldz + \boldbeta  - \bolds_1 = \boldzero \; ,\\
    &\boldz - \boldbeta  - \bolds_2 = \boldzero \; ,\\
    & S_1 V_1 \boldsymbol{e} = \mu \boldsymbol{e} \; , \quad (\bolds_1, \boldnu_1) > \boldzero  \; ,\\
    & S_2 V_2 \boldsymbol{e} = \mu \boldsymbol{e} \; , \quad (\bolds_2, \boldnu_2) > \boldzero \;,
  \end{aligned}
\end{equation}
where we have defined the diagonal matrices
\begin{equation}
S_1 = \diag(\bolds_1) \;, \quad
S_2 = \diag(\bolds_2) \; , \quad
V_1 = \diag(\boldnu_{1}) \;  \quad
V_2 = \diag(\boldnu_{2}) \; .
\end{equation}
Once the iterate is sufficiently close to the central path, the barrier parameter $\mu$ is decreased.
For feasible primal-dual interior points not on the central path, it is customary to define $\mu$ with the duality measure:
\begin{equation}
  \mu = \frac{1}{2n} (\boldnu_1^{\top\!} \bolds_1 + \boldnu_2^{\top\!} \bolds_2);
\end{equation}
observe that this identity holds intrinsically when \eqref{eq:kktmu} holds.
As $\mu \to 0$, we recover the original KKT conditions~\eqref{eq:kkt}.

\begin{definition}[Centrality conditions]
For parameters $C > 0$ and $\gamma \in (0, 1)$, we say that the primal-dual
iterate $(\boldbeta, \boldz, \bolds, \boldy, \boldnu)$  satisfies the \emph{centrality conditions} if
\begin{equation}
  \label{eq:centralpath}
  \begin{aligned}
    & \| \nabla_{(\boldbeta, \boldz, \bolds)} L(\boldbeta, \boldz, \bolds, \boldy, \boldnu) \| \leq C\mu \; ,  \\
    & \| \boldz - \boldbeta - \bolds_1 \| \leq C \mu,\\
    & \| \boldz + \boldbeta - \bolds_2 \| \leq C \mu,\\
    & (\bolds, \boldnu) > 0 \; , \quad s_i \nu_i \geq \gamma \mu \; , \quad \forall i =1 , \cdots, 2n.
  \end{aligned}
\end{equation}
\end{definition}

\subsection{KKT linear systems}
The nonlinear system~\eqref{eq:kktmu} is solved iteratively by a Newton method. The method ensures that the iterates have enough centrality; that is,
by adjusting the right hand side of the Newton system and line search to ensure that \eqref{eq:centralpath} hold; see \cite{wright1997primal,wright2001effects}.

\subsubsection{Augmented KKT system}
 At each iteration, the primal-dual interior-point method solves the
following {\it augmented KKT system}:
\begin{equation}
  \label{eq:augkkt}
  \begin{bmatrix}
    \Mperp^\top \Mperp & \!\!\phm 0 & \phm 0        & \phm 0        & -I     &\phm I
    \\ \!\!\phm 0      & \!\!\phm 0 & \phm 0        & \phm 0        & -I     & -I
    \\ \!\!\phm 0      & \!\!\phm 0 & \phm \Sigma_1 & \phm 0        & -I     & \phm 0
    \\ \!\!\phm 0      & \!\!\phm 0 & \phm 0        & \phm \Sigma_2 & \phm 0 & -I
    \\ \!\!-I          & \!\!-I     & -I            & \phm 0        & \phm 0 & \phm 0
    \\ \!\!\phm I      & \!\!-I     & \phm 0        & -I            & \phm 0 & \phm 0
  \end{bmatrix}
  \begin{bmatrix}
    \Delta\boldbeta \\
    \Delta \boldz \\
    \Delta \bolds_1 \\
    \Delta \bolds_2 \\
    \Delta \boldy_1 \\
    \Delta \boldy_2
  \end{bmatrix}
  = \begin{bmatrix}
   \boldr_1 \\
   \boldr_2 \\
   \boldr_3 \\
   \boldr_4 \\
   \boldr_5 \\
   \boldr_6
  \end{bmatrix}
\end{equation}
with
\begin{equation}
\label{eq:defsigma}
\Sigma_1 := S_1^{-1} V_1 \quad \text{and}  \quad \Sigma_2 := S_2^{-1} V_2 \; .
\end{equation}
By construction, $\Sigma_1$ and $\Sigma_2$ are two positive definite diagonal matrices: $(\Sigma_1)_{ii} > 0$ and $(\Sigma_2)_{ii} > 0$ for all $i=1,\dots,n$.
The right-hand side is given as $\boldr_1 = \Mperp^\top (\boldb - \Mperp\boldbeta) - \boldy_1 + \boldy_2$, $\boldr_2 = \lambda \bolde - \boldy_1 - \boldy_2$, $\boldr_3 = \boldy_1 - \boldnu_1$, $\boldr_4 = \boldy_2 - \boldnu_2$, $\boldr_5 = \boldz + \boldbeta  - \bolds_1$, and $\boldr_6 = \boldz + \boldbeta  - \bolds_2$.

\subsubsection{Condensed KKT system}
We reduce the KKT system~\eqref{eq:augkkt} by exploiting its structure.
The first step is to remove the slack variables. We note that
\eqref{eq:augkkt} implies that
\begin{equation}
  \Delta \bolds_1 = \Sigma_1^{-1} (\boldr_3 + \Delta \boldy_1), \quad
  \Delta \bolds_2 = \Sigma_2^{-1} (\boldr_4 + \Delta \boldy_2).
\end{equation}
As such, \eqref{eq:augkkt} rewrites equivalently as a $4 \times 4$ system:
\begin{equation}
  \label{eq:kktintermediate}
  \begin{bmatrix}
       \Mperp^\top \Mperp & \!\!\phm 0 & \!\!\!\!-I            & \!\!\!\!\!\!\!\phm I
    \\ \!\!\phm 0         & \!\!\phm 0 & \!\!\!\!-I            & \!\!\!\!\!\!\!-I
    \\ \!\!-I             & \!\!-I     & \!\!~~~-\Sigma_1^{-1} & \!\!\!\!\!\!\!\phm 0
    \\ \!\!\phm I         & \!\!-I     & \!\!\!\!\phm 0        & \!-\Sigma_2^{-1}
  \end{bmatrix}
  \begin{bmatrix}
    \Delta\boldbeta \\
    \Delta \boldz \\
    \Delta \boldy_1 \\
    \Delta \boldy_2
  \end{bmatrix}
  = \begin{bmatrix}
   \boldr_1 \\
   \boldr_2 \\
   \boldr_5 + \Sigma_1^{-1} \boldr_3 \\
   \boldr_6 + \Sigma_2^{-1} \boldr_4
  \end{bmatrix}.
\end{equation}
We can remove $(\Delta \boldy_1, \Delta \boldy_2)$ from \eqref{eq:kktintermediate} by noting that
\begin{equation}
  \Delta \boldy_1 = \Sigma_1 (-\Delta \boldbeta - \Delta \boldz - \boldr_5 - \Sigma_1^{-1} \boldr_3), \quad
  \Delta \boldy_2 = \Sigma_2 ( \Delta \boldbeta - \Delta \boldz - \boldr_6 - \Sigma_2^{-1} \boldr_4).
\end{equation}
We obtain the $2 \times 2$ condensed KKT system:
\begin{equation}
  \label{eq:condensedkkt}
  K := \begin{bmatrix}
    \Mperp^\top \Mperp + \Lambda_1 & \Lambda_2 \\
    \Lambda_2 & \Lambda_1
  \end{bmatrix}
  \begin{bmatrix}
    \Delta \boldbeta \\
    \Delta \boldz
  \end{bmatrix}
  =
  \begin{bmatrix}
    \boldr_\beta \\ \boldr_c
  \end{bmatrix},
\end{equation}
where we define the diagonal matrices
\begin{equation}
\Lambda_1 := \Sigma_1 + \Sigma_2 \quad \text{and} \quad \Lambda_2 = \Sigma_1 - \Sigma_2 \; .
\end{equation}
The right-hand-sides are defined respectively as:
\begin{equation*}
    \begin{aligned}
        \boldr_\beta &:= \boldr_1 - \Sigma_1 (\boldr_5 + \Sigma_1^{-1} \boldr_3) + \Sigma_2 (\boldr_6 + \Sigma_2^{-1} \boldr_4)
                  = \boldr_1 - \boldr_3 + \boldr_4 - \Sigma_1 \boldr_5 + \Sigma_2 \boldr_6,\\
        \boldr_c &:= \boldr_2 - \Sigma_1 (\boldr_5 + \Sigma_1^{-1} \boldr_3) - \Sigma_2 (\boldr_6 + \Sigma_2^{-1} \boldr_4)
              = \boldr_2  - \boldr_3 - \boldr_4 - \Sigma_1 \boldr_5 - \Sigma_2 \boldr_6.
    \end{aligned}
\end{equation*}
The choice of the barrier parameter $\mu$ depends on the IPM implementation we are using,
but the condensed KKT structure~\eqref{eq:condensedkkt} remains the same.

\subsection{Krylov methods and preconditioners}
We aim at solving \eqref{eq:condensedkkt} using a matrix-free linear solving to avoid explicitly materializing $\Mperp$.
For that we can use Krylov methods such as \textsc{CG}, \textsc{CR} \cite{hestenes-stiefel-1952}, or \textsc{CAr} \cite{montoison2023minares} dedicated to symmetric positive definite sytems.
First, we look at the behavior of the diagonal matrices $\Lambda_1$ and $\Lambda_2$ close to optimality.
We note $(\boldbeta^\star, \boldz^\star, \bolds^\star)$ the primal solution and $(\boldy^\star, \boldnu^\star)$ the dual solution of \eqref{eq:slackproblem}.
Note that the KKT conditions imply that $\boldy_1^\star = \boldnu_1^\star$ and $\boldy_2^\star = \boldnu_2^\star$,
hence the bound multipliers $(\boldnu_1^\star, \boldnu_2^\star)$ satisfy
\begin{equation}
  \left\{
  \begin{aligned}
    & \Mperp^\top (\Mperp \boldbeta^\star - \boldb^\star) - \boldnu_1^\star + \boldnu_2^\star = \boldzero, \\
    & \lambda \bolde- \boldnu_1^\star - \boldnu_2^\star = \boldzero.
  \end{aligned}
  \right.
\end{equation}
We introduce the three disjoint sets:
\begin{equation}
   \mathcal{I}_+ = \{ i  \; | \; \boldbeta_i^\star > 0 \} \; , \quad
   \mathcal{I}_- = \{ i  \; | \; \boldbeta_i^\star < 0  \} \; , \quad
   \mathcal{I}_0 = \{ i  \; | \; \boldbeta_i^\star = 0 \} \; .
\end{equation}
Note that $\mathcal{I}_+ \cup \mathcal{I}_- \cup \mathcal{I}_0 = \{1, \cdots, n\}$ and the set of active
variables~\eqref{eq:activevariables} satisfies $\mathcal{A} = \mathcal{I}_+ \cup \mathcal{I}_-$.

Supposing strict complementarity hold,  we get the following relations:
\begin{equation}
  \begin{aligned}
  & i \in \mathcal{I}_+ \implies \boldnu_{1,i}^\star = 0 \; , \; \boldnu_{2,i}^\star > 0 && \implies \boldnu_{2,i}^\star = \lambda \; , \\
  & i \in \mathcal{I}_- \implies \boldnu_{1,i}^\star > 0 \; , \; \boldnu_{2,i}^\star  = 0  && \implies \boldnu_{1,i}^\star = \lambda \; , \\
  & i \in \mathcal{I}_0 \implies \boldnu_{1,i}^\star > 0 \; , \; \boldnu_{2,i}^\star > 0   && \implies \boldnu_{1,i}^\star + \boldnu_{2,i}^\star = \lambda \; .
  \end{aligned}
\end{equation}

\begin{proposition}
 \label{prop:wrightspeed}
  Let $(\boldbeta^\star, \boldz^\star, \bolds^\star, \boldy^\star, \boldnu^\star)$ be a primal-dual
  stationary solution of~\eqref{eq:kkt} satisfying the strict complementarity condition.
  Suppose that  the active submatrix $N_\mathcal{A}$ is full row rank.
  Then, for all the iterates satisfying the centrality conditions~\eqref{eq:centralpath}  we have that:
  \begin{equation}
  \label{eq:convsnu}
  \begin{aligned}
  & i \in \mathcal{I}_+ \implies \bolds^\star_{1,i} = \Theta(1) \; , \; \boldnu_{1,i}^\star = \Theta(\mu); && \bolds^\star_{2,i} = \Theta(\mu) \; , \; \boldnu_{2,i}^\star = \Theta(1) \; , \\
  & i \in \mathcal{I}_- \implies \bolds^\star_{1,i} = \Theta(\mu) \; , \; \boldnu_{1,i}^\star = \Theta(1); && \bolds^\star_{2,i} = \Theta(1) \; , \; \boldnu_{2,i}^\star = \Theta(\mu) \; , \\
  & i \in \mathcal{I}_0\; \implies \bolds^\star_{1,i} = \Theta(\mu) \; , \; \boldnu_{1,i}^\star = \Theta(1); && \bolds^\star_{2,i} = \Theta(\mu) \; , \; \boldnu_{2,i}^\star = \Theta(1) \; .
  \end{aligned}
\end{equation}
\end{proposition}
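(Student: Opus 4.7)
The plan is to reduce the claim to two ingredients: an explicit characterization of the limit $(\bolds^\star,\boldnu^\star)$ of the central path on each of the sets $\mathcal{I}_+,\mathcal{I}_-,\mathcal{I}_0$, combined with the classical $O(\mu)$ convergence estimate of \cite{wright2001effects} for primal-dual iterates satisfying the centrality conditions \eqref{eq:centralpath}. Together these force each scalar $\bolds_{j,i}$ and $\boldnu_{j,i}$ to be either $\Theta(1)$ or $\Theta(\mu)$ depending on whether its limit is strictly positive or zero, which is exactly the dichotomy asserted in \eqref{eq:convsnu}.

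First I would pin down $(\bolds^\star,\boldnu^\star)$ explicitly. From the equality constraints of \eqref{eq:slackproblem} one has $\bolds_{1,i}^\star = \boldz_i^\star + \boldbeta_i^\star$ and $\bolds_{2,i}^\star = \boldz_i^\star - \boldbeta_i^\star$, and from the stationarity in $\boldz$ together with the bound constraints one obtains the standard LASSO identity $\boldz_i^\star = |\boldbeta_i^\star|$. This immediately yields $\bolds_{1,i}^\star = 2\boldbeta_i^\star > 0$ and $\bolds_{2,i}^\star = 0$ on $\mathcal{I}_+$, the symmetric statement on $\mathcal{I}_-$, and $\bolds_{1,i}^\star = \bolds_{2,i}^\star = 0$ on $\mathcal{I}_0$. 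The corresponding $\boldnu^\star$ values then follow from complementarity $\bolds_i^\star \boldnu_i^\star = 0$ together with the hypothesis SCS ($\bolds^\star+\boldnu^\star>0$) and the stationarity identity $\boldnu_{1,i}^\star+\boldnu_{2,i}^\star=\lambda$, giving exactly the limit behavior stated in the proposition: one of each pair is $\lambda$ (on $\mathcal{I}_\pm$) or both are strictly positive with sum $\lambda$ (on $\mathcal{I}_0$).

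Next I would invoke the classical local convergence estimate of \cite{wright2001effects}: under LICQ, SOSC, and SCS, every primal-dual iterate satisfying \eqref{eq:centralpath} lies within $O(\mu)$ in norm of the primal-dual solution $(\boldbeta^\star,\boldz^\star,\bolds^\star,\boldy^\star,\boldnu^\star)$. LICQ and SOSC are delivered for \eqref{eq:slackproblem} by Proposition~\ref{prop:wellposedness} (using the full-row-rank hypothesis on $N_\mathcal{A}$), and SCS is an explicit hypothesis, so all three conditions are in force. Reading the bound componentwise gives $|\bolds_{j,i}-\bolds_{j,i}^\star|=O(\mu)$ and $|\boldnu_{j,i}-\boldnu_{j,i}^\star|=O(\mu)$: any component whose limit is strictly positive stays bounded away from zero for small enough $\mu$, hence is $\Theta(1)$, and any component whose limit is zero is at most $O(\mu)$. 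The matching $\Omega(\mu)$ lower bound is supplied by the last clause of \eqref{eq:centralpath}: whenever its conjugate factor is $\Theta(1)$, the inequality $s_i\nu_i\geq\gamma\mu$ forces the other factor to be at least $\Omega(\mu)$, and this combined with the $O(\mu)$ upper bound gives $\Theta(\mu)$.

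The main obstacle is the invocation of the $O(\mu)$ estimate, which I do not plan to reprove but to cite as a black box from \cite{wright2001effects}. The only nontrivial verification is that the centrality conditions \eqref{eq:centralpath} indeed match the standard $\mathcal{N}_{-\infty}$-type neighborhood used there, and that the convex QP \eqref{eq:slackproblem} falls in the class covered by those estimates; once that translation is made (LICQ and SOSC via Proposition~\ref{prop:wellposedness}, SCS by assumption), the componentwise case analysis sketched above produces the full dichotomy \eqref{eq:convsnu}.
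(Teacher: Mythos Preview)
Your proposal is correct and follows essentially the same route as the paper: both arguments verify the regularity assumptions via Proposition~\ref{prop:wellposedness} (LICQ and SOSC, with SCS assumed) and then invoke the estimates of \cite{wright2001effects} as a black box. The paper is terser, citing \cite[Lemma~3.2]{wright2001effects} directly for the $\Theta(1)/\Theta(\mu)$ dichotomy, whereas you unpack the same mechanism by first identifying which components of $(\bolds^\star,\boldnu^\star)$ vanish on each of $\mathcal{I}_+,\mathcal{I}_-,\mathcal{I}_0$, then combining the $O(\mu)$ distance-to-solution bound with the centrality inequality $s_i\nu_i\ge\gamma\mu$ to recover the matching lower bounds; this is exactly the content of the cited lemma.
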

\begin{proof}
    Using Proposition~\ref{prop:wellposedness}, the problem \eqref{eq:slackproblem}
    satisfies LICQ and SOSC under our assumptions. We know that LICQ implies
    the less restrictive Mangasarian-Fromovitz Constraint Qualification (MFCQ).
    By applying \cite[Lemma 3.2]{wright2001effects} to $(\bolds, \boldz)$, we get the results listed
    in \eqref{eq:convsnu}, hence concluding the proof.
\end{proof}

As a direct corollary, we get the following results  for the diagonal matrices $\Sigma_1$ and $\Sigma_2$ defined in \eqref{eq:defsigma}.

\begin{corollary}
  \label{prop:convergencespeed}
  Let $(\boldbeta^\star, \boldz^\star, \bolds^\star, \boldy^\star, \boldnu^\star)$ be a primal-dual
  stationary solution of \eqref{eq:kkt} satisfying the strict complementarity condition.
  Suppose that  the active submatrix $N_\mathcal{A}$ is full row rank.
  Then, we get the following speed of convergence
  for all the iterates satisfying the centrality conditions~\eqref{eq:centralpath}:
  \begin{itemize}
    \item For $i \in \mathcal{I}_+$, $(\Sigma_1)_{ii} = \Theta(\mu)$,  $(\Sigma_2)_{ii} = \Theta(\frac{1}{\mu})$;
    \item For $i \in \mathcal{I}_-$, $(\Sigma_1)_{ii} = \Theta(\frac{1}{\mu})$,  $(\Sigma_2)_{ii} = \Theta(\mu)$;
    \item For $i \in \mathcal{I}_0$, $(\Sigma_1)_{ii} = \Theta(\frac{1}{\mu})$,  $(\Sigma_2)_{ii} = \Theta(\frac{1}{\mu})$.
  \end{itemize}
  In all cases, we get $(\Lambda_1)_{ii} = \Theta(\frac{1}{\mu})$.
\end{corollary}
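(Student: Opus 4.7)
The plan is to treat this as a straightforward bookkeeping exercise that chains Proposition~\ref{prop:wrightspeed} with the definition $\Sigma_1 = S_1^{-1} V_1$, $\Sigma_2 = S_2^{-1} V_2$ from \eqref{eq:defsigma}. Because $S_1, S_2, V_1, V_2$ are all diagonal, the $i$-th diagonal entry decouples as $(\Sigma_1)_{ii} = \boldnu_{1,i}/\bolds_{1,i}$ and $(\Sigma_2)_{ii} = \boldnu_{2,i}/\bolds_{2,i}$, so the corollary reduces to dividing the asymptotic rates listed in \eqref{eq:convsnu} term-by-term.

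First I invoke Proposition~\ref{prop:wrightspeed}: under SCS and the full-row-rank assumption on $N_\mathcal{A}$, the stated scalings of $\bolds_1, \bolds_2, \boldnu_1, \boldnu_2$ hold uniformly along the neighborhood \eqref{eq:centralpath}. Then I go through the three disjoint index sets. For $i \in \mathcal{I}_+$, the proposition gives $\bolds_{1,i} = \Theta(1)$, $\boldnu_{1,i} = \Theta(\mu)$ and $\bolds_{2,i} = \Theta(\mu)$, $\boldnu_{2,i} = \Theta(1)$, so $(\Sigma_1)_{ii} = \Theta(\mu)/\Theta(1) = \Theta(\mu)$ and $(\Sigma_2)_{ii} = \Theta(1)/\Theta(\mu) = \Theta(1/\mu)$. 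The cases $i \in \mathcal{I}_-$ and $i \in \mathcal{I}_0$ are symmetric arithmetic: for $\mathcal{I}_-$ the roles of $\Sigma_1$ and $\Sigma_2$ swap, and for $\mathcal{I}_0$ both slack variables tend to zero at rate $\mu$ while the multipliers stay $\Theta(1)$, giving $\Theta(1/\mu)$ in both coordinates.

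For the final claim about $\Lambda_1 = \Sigma_1 + \Sigma_2$, I note that in every one of the three cases at least one of $(\Sigma_1)_{ii}, (\Sigma_2)_{ii}$ is $\Theta(1/\mu)$ while the other is either $\Theta(\mu)$ or also $\Theta(1/\mu)$; since both summands are positive (recall $\Sigma_1, \Sigma_2 \succ 0$ by construction in \eqref{eq:defsigma}), the $\Theta(1/\mu)$ term dominates and bounds the sum from above and below, yielding $(\Lambda_1)_{ii} = \Theta(1/\mu)$ on all of $[n]$. There is essentially no obstacle here beyond careful case splitting; the real work was already done in Proposition~\ref{prop:wrightspeed} via \cite[Lemma~3.2]{wright2001effects}, and the corollary's role is to translate those rates into rates for the diagonal matrices that actually appear in the condensed KKT system~\eqref{eq:condensedkkt}, which is what the subsequent preconditioning analysis will need.
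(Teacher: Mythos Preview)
Your proposal is correct and matches the paper's approach: the paper states this result as a direct corollary of Proposition~\ref{prop:wrightspeed} without giving any explicit proof, and your argument---plugging the rates from \eqref{eq:convsnu} into $(\Sigma_j)_{ii} = \boldnu_{j,i}/\bolds_{j,i}$ and then summing for $\Lambda_1$---is exactly the intended bookkeeping.
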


To improve the convergence of Krylov methods on KKT systems, we study the preconditioner
\begin{equation}
  P :=
  \begin{bmatrix}
    I + \Lambda_1 & \Lambda_2 \\
    \Lambda_2 & \Lambda_1
  \end{bmatrix}.
\end{equation}
We note that $P$ is very similar to the preconditioner introduced earlier in \cite{Fountoulakis_2013}; the only difference being
they multiply the identity matrix by a scalar dependent on the percentage of missing data.

We define the following diagonal matrices:
\begin{equation}
\label{eq:defBD}
  B := \big(\Lambda_1 - \Lambda_2 (I + \Lambda_1)^{-1} \Lambda_2\big) \; , \quad
D := \big(\Lambda_1 (I + \Lambda_1) - \Lambda_2^2 \big) \; .
\end{equation}
As all the matrices appearing in $B$ are diagonal, we have
$B = (I + \Lambda_1)^{-1} \big(\Lambda_1(I + \Lambda_1) - \Lambda_2^2\big) = (I + \Lambda_1)^{-1} D$.
We explicit further the structure of the matrices $B$ and $D$.
\begin{proposition}
  \label{prop:exprbd}
  The matrices $B$ and $D$ are positive definite, and $D = \Sigma_1 + \Sigma_2 + 4\Sigma_1 \Sigma_2$.
\end{proposition}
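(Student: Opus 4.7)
The plan is to compute $D$ explicitly by expanding $\Lambda_1(I+\Lambda_1)-\Lambda_2^2$ in terms of $\Sigma_1$ and $\Sigma_2$, then derive the expression for $B$ using the identity $B = (I+\Lambda_1)^{-1}D$ established just before the proposition statement, and finally observe that all matrices involved are diagonal with strictly positive entries.

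First I would compute $\Lambda_1^2 - \Lambda_2^2 = (\Sigma_1+\Sigma_2)^2 - (\Sigma_1-\Sigma_2)^2$. Since $\Sigma_1$ and $\Sigma_2$ are diagonal, they commute, and the cross terms yield $\Lambda_1^2-\Lambda_2^2 = 4\Sigma_1\Sigma_2$. Adding $\Lambda_1 = \Sigma_1+\Sigma_2$ gives the claimed formula
\begin{equation*}
   D = \Lambda_1 + (\Lambda_1^2 - \Lambda_2^2) = \Sigma_1 + \Sigma_2 + 4 \Sigma_1 \Sigma_2.
\end{equation*}
Because $\Sigma_1$ and $\Sigma_2$ are positive definite diagonal matrices (as noted right after \eqref{eq:defsigma}), every diagonal entry of $\Sigma_1+\Sigma_2+4\Sigma_1\Sigma_2$ is strictly positive, so $D \succ 0$.

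For $B$, I would use the factorization $B = (I+\Lambda_1)^{-1} D$ that was explicitly derived just above the proposition. The matrix $I+\Lambda_1$ is diagonal with entries bounded below by $1$, so $(I+\Lambda_1)^{-1}$ is a positive definite diagonal matrix. Since $(I+\Lambda_1)^{-1}$ and $D$ are both diagonal, they commute and their product is itself diagonal with strictly positive entries, hence $B \succ 0$.

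There is no real obstacle here: the argument is an algebraic identity followed by an inspection of diagonal positivity. The only subtlety worth stating carefully is that all four matrices $\Sigma_1$, $\Sigma_2$, $\Lambda_1$, $\Lambda_2$ are diagonal, so the standard pitfall of non-commuting factors in $\Lambda_1^2 - \Lambda_2^2$ and in $(I+\Lambda_1)^{-1}D$ does not arise, and positive definiteness reduces to checking entrywise positivity on the diagonal.
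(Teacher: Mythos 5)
Your proposal is correct and follows essentially the same route as the paper: expand $\Lambda_1^2-\Lambda_2^2=(\Sigma_1+\Sigma_2)^2-(\Sigma_1-\Sigma_2)^2=4\Sigma_1\Sigma_2$ to obtain the formula for $D$, conclude $D\succ 0$ from the positivity of the diagonal entries, and get $B\succ 0$ via $B=(I+\Lambda_1)^{-1}D$. If anything, you spell out the final step for $B$ slightly more explicitly than the paper does, which is fine.
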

\begin{proof}
  We have $\Lambda_1 \succ 0$, implying $(I + \Lambda_1) \succ 0$.
  Additionally, $D = \Lambda_1 + \Lambda_1^2 - \Lambda_2^2 = \Sigma_1 + \Sigma_2 + (\Sigma_1 + \Sigma_2)^2 - (\Sigma_1 - \Sigma_2)^2 = \Sigma_1 + \Sigma_2 + 4 \Sigma_1\Sigma_2$. As $\Sigma_1$ and $\Sigma_2$
  are positive definite, we deduce $\Lambda_1(I + \Lambda_1) - \Lambda_2^2 \succ 0$, concluding
  the proof.
\end{proof}

\begin{proposition}
  The inverse of the preconditioner $P^{-1}$ is given by
  \begin{equation}
    \label{eq:invprecond1}
    P^{-1} = \begin{bmatrix}
      \Lambda_1 D^{-1} & - \Lambda_2 D^{-1} \\
      -D^{-1} \Lambda_2  & B^{-1}
    \end{bmatrix} \; .
  \end{equation}
\end{proposition}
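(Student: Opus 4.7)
The statement is a routine block-matrix identity, so the cleanest route is to verify $P \cdot P^{-1} = I$ by direct multiplication rather than to reconstruct $P^{-1}$ from scratch via a Schur-complement derivation. The crucial structural fact I would exploit is that $\Lambda_1$, $\Lambda_2$, $I+\Lambda_1$, $B$ and $D$ are all \emph{diagonal} matrices and therefore commute pairwise; this turns each block product into a scalar-like calculation with no transposes to track. Note also that $P$ is symmetric and the claimed $P^{-1}$ is symmetric once one observes that the two off-diagonal blocks $-\Lambda_2 D^{-1}$ and $-D^{-1}\Lambda_2$ are equal, since products of diagonals commute. Hence checking only $P P^{-1} = I$ suffices.

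First I would record two identities that do the algebraic heavy lifting. From \eqref{eq:defBD} we have $D = \Lambda_1(I+\Lambda_1) - \Lambda_2^2$, and by commutativity $B = (I+\Lambda_1)^{-1} D$, so that $B^{-1} = D^{-1}(I+\Lambda_1)$. Proposition~\ref{prop:exprbd} guarantees $B, D \succ 0$, so both inverses appearing on the right-hand side of \eqref{eq:invprecond1} are well-defined.

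Next I would expand the four blocks of $P \cdot P^{-1}$ one at a time. The $(1,1)$ block is $(I+\Lambda_1)\Lambda_1 D^{-1} - \Lambda_2^2 D^{-1} = [\Lambda_1(I+\Lambda_1) - \Lambda_2^2] D^{-1} = D D^{-1} = I$. The $(2,2)$ block is $-\Lambda_2^2 D^{-1} + \Lambda_1 B^{-1}$; after substituting $B^{-1} = D^{-1}(I+\Lambda_1)$ and pulling $D^{-1}$ out by commutativity, it also reduces to $D^{-1}[\Lambda_1(I+\Lambda_1) - \Lambda_2^2] = I$. The $(1,2)$ block is $-(I+\Lambda_1)\Lambda_2 D^{-1} + \Lambda_2 B^{-1} = \Lambda_2[-(I+\Lambda_1)D^{-1} + D^{-1}(I+\Lambda_1)] = 0$ by commutativity. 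The $(2,1)$ block $\Lambda_2 \Lambda_1 D^{-1} - \Lambda_1 D^{-1} \Lambda_2$ vanishes by the same argument.

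There is no genuine obstacle here; the only thing to watch carefully is the repeated substitution $B^{-1} = D^{-1}(I+\Lambda_1)$ and the systematic use of diagonality to move factors around. If one preferred a derivation rather than a verification, the standard $2 \times 2$ block-inverse formula with pivot $I+\Lambda_1$ yields exactly \eqref{eq:invprecond1}: the Schur complement equals $\Lambda_1 - \Lambda_2(I+\Lambda_1)^{-1}\Lambda_2 = B$, and the same two simplifications produce the four entries above.
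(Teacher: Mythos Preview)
Your proof is correct. Your main argument proceeds by direct verification of $P\cdot P^{-1}=I$, whereas the paper takes the derivation route you mention only in your closing remark: it applies the standard $2\times 2$ block-inversion formula with pivot $I+\Lambda_1$, obtaining the Schur complement $B$, and then simplifies each of the four resulting blocks using $B^{-1}=D^{-1}(I+\Lambda_1)$ and commutativity of the diagonal factors. Both arguments rest on exactly the same two ingredients (diagonality/commutativity and the identity $B^{-1}=D^{-1}(I+\Lambda_1)$), so neither is materially shorter or more general; your verification is marginally more self-contained since it avoids quoting the block-inverse formula, while the paper's derivation has the minor advantage of explaining where the expression \eqref{eq:invprecond1} comes from rather than presupposing it.
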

\begin{proof}
Using the block-inversion formula, we have that
\begin{equation*}
  P^{-1} = \begin{bmatrix}
    (I+\Lambda_1)^{-1} + (I+\Lambda_1)^{-1} \Lambda_2 B^{-1} \Lambda_2 (I+\Lambda_1)^{-1} & -(I+\Lambda_1)^{-1} \Lambda_2 B^{-1} \\
    -B^{-1} \Lambda_2 (I+\Lambda_1)^{-1} & B^{-1}
  \end{bmatrix} \; .
\end{equation*}
As all the matrices are diagonal and $B^{-1} = D^{-1} (I + \Lambda_1)$, we get the expressions
of the blocks $(1, 2)$ and $(2, 1)$. Last, observe that as the diagonal matrices commute:
\begin{equation*}
  \begin{aligned}
    (I+\Lambda_1)^{-1}\Big(I +  \Lambda_2 B^{-1} \Lambda_2 (I+\Lambda_1)^{-1} \Big)
   &= (I+\Lambda_1)^{-1}\Big(I +  \Lambda_2 D^{-1} \Lambda_2 \Big) \\
   &= (I+\Lambda_1)^{-1}\big(D +  \Lambda_2^2  \big)D^{-1} \\
   &= \Lambda_1 D^{-1}
  \end{aligned}
\end{equation*}
Concluding the proof.
\end{proof}

According to Proposition~\ref{prop:exprbd}, the $(1, 1)$ block in \eqref{eq:invprecond1} simplifies as
\begin{equation}
\label{eq:11block}
    \Lambda_1 D^{-1} = (\Sigma_1 + \Sigma_2) (\Sigma_1 + \Sigma_2 + 4\Sigma_1 \Sigma_2)^{-1} = (\Sigma_1^{-1} + \Sigma_2^{-1})(\Sigma_1^{-1} + \Sigma_2^{-1} + 4)^{-1} \; .
\end{equation}
As all the diagonal matrices $\Sigma_1$ and $\Sigma_2$ are positive, we get that the diagonal values
$(\Lambda_1 D^{-1})_{ii}$ are bounded between $0$ and $1$ for all $i \in [n]$.

\begin{proposition}
  The preconditioned matrix $P^{-1} K$ satisfies
  \begin{equation}
      \label{eq:precond1}
    P^{-1} K = \begin{bmatrix}
      I + \Lambda_1 D^{-1} \big(\Mperp^\top \Mperp - I \big) & 0 \\
      -D^{-1} \Lambda_2 \big(\Mperp^\top \Mperp -I \big) & I
    \end{bmatrix} \; .
  \end{equation}
\end{proposition}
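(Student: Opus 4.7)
The plan is to verify the claimed factorization by a direct block multiplication of $P^{-1}$, whose expression was established in the previous proposition, with $K$. Since both matrices have $2\times 2$ diagonal-block structure and all of $\Lambda_1, \Lambda_2, D, B$ are diagonal, the matrices involved pairwise commute, so block products reduce to scalar algebra on the diagonal. The only non-diagonal ingredient is $\Mperp^\top \Mperp$, which appears only in the $(1,1)$ block of $K$ and is multiplied on the left by diagonal factors that commute through without issue.

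First I would compute the four blocks of $P^{-1}K$ using
\[
P^{-1} = \begin{bmatrix} \Lambda_1 D^{-1} & -\Lambda_2 D^{-1} \\ -D^{-1}\Lambda_2 & B^{-1} \end{bmatrix}, \quad
K = \begin{bmatrix} \Mperp^\top\Mperp + \Lambda_1 & \Lambda_2 \\ \Lambda_2 & \Lambda_1 \end{bmatrix}.
\]
The $(1,2)$ block is $\Lambda_1 D^{-1}\Lambda_2 - \Lambda_2 D^{-1}\Lambda_1 = 0$ by commutativity. The $(2,2)$ block is $-D^{-1}\Lambda_2^2 + B^{-1}\Lambda_1$; using $B^{-1} = D^{-1}(I+\Lambda_1)$ from the definitions~\eqref{eq:defBD}, this becomes $D^{-1}\bigl(\Lambda_1 + \Lambda_1^2 - \Lambda_2^2\bigr) = D^{-1} D = I$, since $D = \Lambda_1(I+\Lambda_1) - \Lambda_2^2$.

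For the $(1,1)$ block, I would expand
\[
\Lambda_1 D^{-1}(\Mperp^\top \Mperp + \Lambda_1) - \Lambda_2 D^{-1}\Lambda_2 \;=\; \Lambda_1 D^{-1}\Mperp^\top\Mperp + D^{-1}(\Lambda_1^2 - \Lambda_2^2),
\]
then substitute the identity $\Lambda_1^2 - \Lambda_2^2 = D - \Lambda_1$ (again from~\eqref{eq:defBD}) to rewrite $D^{-1}(\Lambda_1^2 - \Lambda_2^2) = I - \Lambda_1 D^{-1}$. Collecting terms yields $I + \Lambda_1 D^{-1}(\Mperp^\top\Mperp - I)$, matching the claimed form. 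Similarly, the $(2,1)$ block $-D^{-1}\Lambda_2(\Mperp^\top\Mperp + \Lambda_1) + B^{-1}\Lambda_2$ simplifies by using $B^{-1}\Lambda_2 = D^{-1}(I+\Lambda_1)\Lambda_2$, which cancels the $-D^{-1}\Lambda_2\Lambda_1$ term and leaves $-D^{-1}\Lambda_2(\Mperp^\top\Mperp - I)$.

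There is no real obstacle here; the proof is an exercise in careful bookkeeping. The one place where a mistake is easy to make is forgetting that $B^{-1} = D^{-1}(I+\Lambda_1)$ rather than $D^{-1}$ alone, so I would write out $B^{-1}\Lambda_1$ and $B^{-1}\Lambda_2$ explicitly before collecting terms. The key algebraic identity driving every simplification is the single relation $D = \Lambda_1 + \Lambda_1^2 - \Lambda_2^2$ coming from Proposition~\ref{prop:exprbd}; everything else is scalar manipulation permitted by the commutativity of diagonal matrices.
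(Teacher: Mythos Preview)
Your proposal is correct and follows essentially the same approach as the paper: a direct block-by-block computation of $P^{-1}K$ using the explicit inverse $P^{-1}$ from the preceding proposition, together with the identities $B^{-1}=D^{-1}(I+\Lambda_1)$ and $D=\Lambda_1(I+\Lambda_1)-\Lambda_2^2$. Your treatment of the $(2,1)$ and $(2,2)$ blocks is in fact slightly cleaner than the paper's, which contains a couple of minor typos in those computations (e.g.\ writing $B^{-1}\Lambda_1$ where $B^{-1}\Lambda_2$ is meant).
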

\begin{proof}
  We compute the four blocks in $P^{-1}K$ one by one using \eqref{eq:invprecond1}.
  First,
  \begin{equation*}
    \begin{aligned}
      (P^{-1} K)_{11} = \Lambda_1 D^{-1} (\Mperp^\top \Mperp + \Lambda_1) - \Lambda_2 D^{-1} \Lambda_2 &= \Lambda_1 D^{-1} \Mperp^\top \Mperp + D^{-1} (\Lambda_1^2 - \Lambda_2^2) \\
                      &= \Lambda_1 D^{-1} \Mperp^\top \Mperp - D^{-1} \Lambda_1 + I,
    \end{aligned}
  \end{equation*}
  where the last simplification comes from the definition of $D$ in \eqref{eq:defBD}. Second,
  \begin{equation*}
    \begin{aligned}
      (P^{-1} K)_{12} &= \Lambda_1 D^{-1} \Lambda_2 - \Lambda_2 D^{-1} \Lambda_1 = 0 \; , \\
      (P^{-1} K)_{22} &= -D^{-1} \Lambda_2^2 + B^{-1} \Lambda_1  = D^{-1} \big((I+\Lambda_1) \Lambda_1  -\Lambda_2^2 + B^{-1} \big) = I \; .
    \end{aligned}
  \end{equation*}
  Finally,
  \begin{equation*}
    \begin{aligned}
    (P^{-1} K)_{21} &= -D^{-1} \Lambda_2 (\Mperp^\top \Mperp + \Lambda_1) + B^{-1} \Lambda_1  \\
                    &= -D^{-1} \Lambda_2 \Mperp^\top \Mperp - D^{-1} \Lambda_2 \Lambda_1 + D^{-1} (I+\Lambda_1) \Lambda_2 \\
                    &= -D^{-1} \Lambda_2 \big( \Mperp^\top \Mperp - I \big) \; .
    \end{aligned}
  \end{equation*}
  Concluding the proof.
\end{proof}

We note that the $(1,2)$ block is $0$ and the block $(2, 2)$ is the identity.
We are interested in the asymptotic behavior of the preconditioned matrix
$P^{-1}K$ as we drive the barrier parameter to $0$.
Without loss of generality, we suppose in the following that the variables
are ordered in the order $(\mathcal{I}_+, \mathcal{I}_-, \mathcal{I}_0)$.

\begin{proposition}
  \label{prop:limitprecond}
  \!Let $(\boldbeta^\star, \boldz^\star, \bolds^\star, \boldy^\star, \boldnu^\star)$ be a primal-dual
  stationary solution of \eqref{eq:kkt} satisfying the strict complementarity condition.
  Suppose that  the active submatrix $N_\mathcal{A}$ is full row rank.
  Then, for $\mu \to 0$ we get the following limit for the $(1,1)$ and $(2,1)$ blocks:
  \begin{subequations}
    \label{eq:precond1conv}
    \begin{equation}
      (P^{-1} K)_{11} \to \begin{bmatrix}
          (\Mperp^\top \Mperp)_{\mathcal{I}_{+}\times \mathcal{I}_{+}} &(\Mperp^\top \Mperp)_{\mathcal{I}_{+}\times \mathcal{I}_{-}} &(\Mperp^\top \Mperp)_{\mathcal{I}_{+}\times \mathcal{I}_{0}}\\
          (\Mperp^\top \Mperp)_{\mathcal{I}_{-}\times \mathcal{I}_{+}} &(\Mperp^\top \Mperp)_{\mathcal{I}_{-}\times \mathcal{I}_{-}} &(\Mperp^\top \Mperp)_{\mathcal{I}_{-}\times \mathcal{I}_{0}}\\
          0 &0 &I_{\mathcal{I}_{0}\times \mathcal{I}_{0}}\\
      \end{bmatrix} \;,
    \end{equation}
    \begin{equation}
      (P^{-1} K)_{21} \to \begin{bmatrix}
          (\Mperp^\top \Mperp - I)_{\mathcal{I}_{+}\times \mathcal{I}_{+}} &(\Mperp^\top \Mperp)_{\mathcal{I}_{+}\times \mathcal{I}_{-}} &(\Mperp^\top \Mperp)_{\mathcal{I}_{+}\times \mathcal{I}_{0}}\\
          (-\Mperp^\top \Mperp)_{\mathcal{I}_{-}\times \mathcal{I}_{+}} &(I - \Mperp^\top \Mperp )_{\mathcal{I}_{-}\times \mathcal{I}_{-}} &(-\Mperp^\top \Mperp)_{\mathcal{I}_{-}\times \mathcal{I}_{0}}\\
          0 &0 & 0
      \end{bmatrix} \;,
    \end{equation}
  \end{subequations}
\end{proposition}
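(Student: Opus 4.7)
The plan is to read off the limits entry by entry from the explicit form of $P^{-1}K$ given in~\eqref{eq:precond1}, using the asymptotics of $\Sigma_1$ and $\Sigma_2$ from Corollary~\ref{prop:convergencespeed}. Since $(P^{-1}K)_{11} = I + \Lambda_1 D^{-1}(\Mperp^\top \Mperp - I)$ and $(P^{-1}K)_{21} = -D^{-1}\Lambda_2(\Mperp^\top \Mperp - I)$, and since $\Lambda_1$, $\Lambda_2$, $D$ are all diagonal, the entire task reduces to computing the pointwise limits of the two diagonal matrices $\Lambda_1 D^{-1}$ and $-D^{-1}\Lambda_2$ on each of the three disjoint index sets $\mathcal{I}_+$, $\mathcal{I}_-$, $\mathcal{I}_0$.

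First I would handle $\Lambda_1 D^{-1}$. Using the identity \eqref{eq:11block}, the $i$-th diagonal entry is $\bigl((\Sigma_1)_{ii}^{-1} + (\Sigma_2)_{ii}^{-1}\bigr)\bigl((\Sigma_1)_{ii}^{-1} + (\Sigma_2)_{ii}^{-1} + 4\bigr)^{-1}$. By Corollary~\ref{prop:convergencespeed}, on $\mathcal{I}_+$ we have $(\Sigma_1)_{ii}^{-1} = \Theta(1/\mu)$ and $(\Sigma_2)_{ii}^{-1} = \Theta(\mu)$, and symmetrically on $\mathcal{I}_-$; in both cases the sum diverges and the ratio tends to $1$. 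On $\mathcal{I}_0$, both inverses go like $\Theta(\mu)$, so the numerator vanishes while the denominator tends to $4$, giving limit $0$. Hence $\Lambda_1 D^{-1} \to \mathrm{diag}(I_{\mathcal{I}_+}, I_{\mathcal{I}_-}, 0_{\mathcal{I}_0})$ in the chosen ordering, and substituting this into $I + \Lambda_1 D^{-1}(\Mperp^\top \Mperp - I)$ zeroes out the $-I$ correction on the $\mathcal{I}_+ \cup \mathcal{I}_-$ rows (leaving $\Mperp^\top\Mperp$ there) and kills the $\Mperp^\top\Mperp$ contribution on the $\mathcal{I}_0$ rows (leaving $I$ there), which is exactly the claimed limit for $(P^{-1}K)_{11}$.

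Next I would treat $-D^{-1}\Lambda_2$ in the same way. Using $D = \Sigma_1 + \Sigma_2 + 4\Sigma_1\Sigma_2$ from Proposition~\ref{prop:exprbd} and $\Lambda_2 = \Sigma_1 - \Sigma_2$, dividing both numerator and denominator by $(\Sigma_1\Sigma_2)_{ii}$ rewrites the $i$-th diagonal entry as $\bigl((\Sigma_1)_{ii}^{-1} - (\Sigma_2)_{ii}^{-1}\bigr)\bigl((\Sigma_1)_{ii}^{-1} + (\Sigma_2)_{ii}^{-1} + 4\bigr)^{-1}$. The same case analysis based on Corollary~\ref{prop:convergencespeed} then gives limits $+1$ on $\mathcal{I}_+$, $-1$ on $\mathcal{I}_-$, and $0$ on $\mathcal{I}_0$. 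Multiplying this diagonal limit against $(\Mperp^\top\Mperp - I)$ row by row yields the block pattern announced in the statement for $(P^{-1}K)_{21}$: the $\mathcal{I}_+$ rows keep $\Mperp^\top\Mperp - I$, the $\mathcal{I}_-$ rows flip sign to $I - \Mperp^\top\Mperp$, and the $\mathcal{I}_0$ rows vanish.

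The main obstacle is essentially bookkeeping rather than analysis: one must be a bit careful that the $-I$ terms in $(\Mperp^\top\Mperp - I)$ only appear in the diagonal column blocks (so the off-diagonal column blocks like $\mathcal{I}_+ \times \mathcal{I}_-$ show just $\Mperp^\top\Mperp$, with no $I$-correction), and that the assumptions of the proposition --- strict complementarity and full row rank of $N_{\mathcal{A}}$ --- are exactly what allow us to invoke Corollary~\ref{prop:convergencespeed} to obtain the $\Theta(\mu)$ versus $\Theta(1/\mu)$ scalings that drive every one of the six limit computations above.
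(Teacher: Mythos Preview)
Your proposal is correct and follows essentially the same approach as the paper: compute the diagonal limits of $\Lambda_1 D^{-1}$ and $\Lambda_2 D^{-1}$ on each of $\mathcal{I}_+,\mathcal{I}_-,\mathcal{I}_0$ via Corollary~\ref{prop:convergencespeed} and Proposition~\ref{prop:exprbd}, then substitute into the block formula~\eqref{eq:precond1}. The only cosmetic difference is that the paper first records the asymptotics of $(\Sigma_1\Sigma_2)_{ii}$ on each index set, whereas you work directly with the rewritten ratios obtained by dividing numerator and denominator by $(\Sigma_1\Sigma_2)_{ii}$; the resulting limits and the bookkeeping are identical.
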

\begin{proof}
  Following Proposition~\ref{prop:exprbd}, $D = \Sigma_1 + \Sigma_2 + 4\Sigma_1\Sigma_2$.
  Using Proposition~\ref{prop:convergencespeed}, we get that
  \begin{equation}
    (\Sigma_1 \Sigma_2)_{\mathcal{I}_+} = \Theta(1)  \;, \quad
    (\Sigma_1 \Sigma_2)_{\mathcal{I}_-} = \Theta(1)  \;, \quad
    (\Sigma_1 \Sigma_2)_{\mathcal{I}_0} = \Theta\left(\frac{1}{\mu^2}\right)  \;.
  \end{equation}
  As a consequence, we deduce that, as $\mu \to 0$,
  \begin{itemize}
    \item For $i \in \mathcal{I}_+$, $(\Lambda_1 D^{-1})_{ii} \to 1$ and $(\Lambda_2 D^{-1})_{ii} \to -1$;
    \item For $i \in \mathcal{I}_-$, $(\Lambda_1 D^{-1})_{ii} \to 1$ and $(\Lambda_2 D^{-1})_{ii} \to 1$;
    \item For $i \in \mathcal{I}_0$, $(\Lambda_1 D^{-1})_{ii} \to 0$ and $(\Lambda_2 D^{-1})_{ii} \to 0$.
  \end{itemize}
  Using the formula of the preconditioner given in \eqref{eq:precond1},
  we get the two limits listed in \eqref{eq:precond1conv}.
\end{proof}

The repartition of the eigenvalues in the preconditioned matrix $P^{-1} K$ depends on the sparsity of the solution $\boldbeta^\star$.
Using Proposition~\ref{prop:limitprecond} together with \eqref{eq:precond1} and \eqref{eq:precond1conv}, we get that the eigenvalue
$1$ has multiplicity $2n - \ell$, where $\ell$ is the number of nonzeroes elements in the sparse
vector $\boldbeta^\star$: $\ell = |\mathcal{I}_+| + |\mathcal{I}_-| = | \mathcal{A} |$.
The sparser the solution, the better the preconditioner.
Going one step further, we note the matrix:
\begin{equation}
   Q^\star =
   \begin{bmatrix}
      (\Mperp^\top \Mperp)_{\mathcal{I}_{+}\times \mathcal{I}_{+}} &(\Mperp^\top \Mperp)_{\mathcal{I}_{+}\times \mathcal{I}_{-}} \\
      \vspace{-0.1cm} & \vspace{-0.1cm} \\
      (\Mperp^\top \Mperp)_{\mathcal{I}_{-}\times \mathcal{I}_{+}} &(\Mperp^\top \Mperp)_{\mathcal{I}_{-}\times \mathcal{I}_{-}}
  \end{bmatrix} \;,
\end{equation}
Let  $\lambda_1 = \lambda_{min}(Q^\star)$ and $\lambda_{\ell} = \lambda_{max}(Q^\star)$.
We note the spectral condition number $\kappa$ of a matrix $A$ the number: $\kappa(A) := \frac{\lambda_{max}(A)}{\lambda_{min}(A)}$
(it is equivalent to the Euclidean condition number if the matrix $A$ is symmetric).

The following result estimates the \emph{asymptotic} behavior of the conditioning number $\kappa(P^{-1}K)$ as
the barrier $\mu$ is reduced to $0$.
\begin{proposition}
\label{prop:boundskappa}
  Let $P^{-1} K$ be the preconditioned matrix defined in \eqref{eq:precond1}.
  If the solution of \eqref{eq:kkt} satisfies strict complementarity and the active submatrix $N_\mathcal{A}$
  is full row rank, the spectral condition number $\kappa_\mu := \kappa(P^{-1} K)$ converges to
  \begin{equation}
    \label{eq:limitcond}
  \lim_{\mu \to 0} \; \kappa_\mu = \dfrac{\max\{1, \lambda_{\ell}\}}{\min\{1, \lambda_{1}\}} \; .
  \end{equation}
  In particular, this implies that the condition number of the preconditioned matrix is bounded on the entire ``large" neighborhood of the central path  \eqref{eq:centralpath}.
\end{proposition}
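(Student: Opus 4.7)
The plan is to reduce the computation of $\kappa_\mu$ to a spectral analysis of the limit of $P^{-1}K$ as $\mu \to 0$, using the block-triangular structure already established in Proposition~\ref{prop:limitprecond}. The first thing I would observe is that although $P^{-1}K$ is not symmetric, it is similar to the symmetric matrix $P^{-1/2}KP^{-1/2}$ (well-defined since, under the centrality conditions, $P$ is symmetric positive definite, as is $K$ by Proposition~\ref{prop:wellposedness}). This gives two things at once: the eigenvalues of $P^{-1}K$ are real and positive, so the quantity $\lambda_{\max}/\lambda_{\min}$ used in the definition of $\kappa$ makes sense and coincides with the Euclidean condition number of $P^{-1/2}KP^{-1/2}$; and these eigenvalues depend continuously on $\mu$ along the path defined by \eqref{eq:centralpath}, since the entries of $P^{-1/2}KP^{-1/2}$ are rational functions of the diagonals of $\Sigma_1,\Sigma_2$ that remain finite away from $\mu=0$ and admit the finite limits identified in Proposition~\ref{prop:limitprecond}.

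Next, I would read off the spectrum of the limit matrix. From \eqref{eq:precond1} the $(1,2)$ block of $P^{-1}K$ is identically zero and the $(2,2)$ block is the identity; consequently the limit matrix is block lower triangular with diagonal blocks equal to $\lim_{\mu \to 0} (P^{-1}K)_{11}$ and $I_n$. The $(2,2)$ block contributes the eigenvalue $1$ with multiplicity $n$. Inside the $(1,1)$ limit given in Proposition~\ref{prop:limitprecond}, a second layer of block triangularity appears once the variables are ordered as $(\mathcal{I}_+, \mathcal{I}_-, \mathcal{I}_0)$: the $\mathcal{A}\times\mathcal{A}$ top-left block is exactly $Q^\star$, the $\mathcal{I}_0 \times \mathcal{I}_0$ bottom-right block is $I$, and the lower-left is zero. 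Hence the eigenvalues of the $(1,1)$ limit are those of $Q^\star$ together with $1$ of multiplicity $|\mathcal{I}_0| = n - \ell$, and the full limit spectrum of $P^{-1}K$ is the eigenvalues of $Q^\star$ (ranging between $\lambda_1 > 0$ and $\lambda_\ell$, where positivity of $\lambda_1$ uses the full-rank hypothesis on $N_\mathcal{A}$) together with the eigenvalue $1$ of multiplicity $2n - \ell$.

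From this spectrum I would immediately conclude that the limit of $\lambda_{\max}(P^{-1}K)$ is $\max\{1,\lambda_\ell\}$ and the limit of $\lambda_{\min}(P^{-1}K)$ is $\min\{1,\lambda_1\}$. Combined with the continuity of the extreme eigenvalues of the symmetric matrix $P^{-1/2}KP^{-1/2}$ in the data, this gives \eqref{eq:limitcond}. For the final ``bounded over the large neighborhood'' claim, I would argue that by continuity $\kappa_\mu$ extends continuously to $\mu = 0$ with the finite value above, while for any $\mu$ bounded away from zero inside the neighborhood the matrix $P^{-1/2}KP^{-1/2}$ is a continuous function of $\mu$ on a compact set and hence has $\lambda_{\min}$ bounded below and $\lambda_{\max}$ bounded above; together this yields a uniform bound of $\kappa_\mu$ on the whole neighborhood.

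The main obstacle I anticipate is not the spectral computation itself, which is a routine block-triangular reduction, but justifying that the qualitative limit in Proposition~\ref{prop:limitprecond} translates into the quantitative convergence of eigenvalues. Without the similarity to the symmetric pencil $P^{-1/2}KP^{-1/2}$, one would have to worry about non-normality and possible sensitivity of eigenvalues under perturbation; the symmetrization step is what makes continuity and the application of Weyl's inequality straightforward, and it is the key bridge from the entrywise limits of Proposition~\ref{prop:limitprecond} to the spectral limit in \eqref{eq:limitcond}.
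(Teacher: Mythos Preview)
Your proposal is correct and follows essentially the same route as the paper: both arguments read off the limiting spectrum $\{1\}\cup\operatorname{spec}(Q^\star)$ from the block-triangular structure furnished by Proposition~\ref{prop:limitprecond} and conclude \eqref{eq:limitcond} directly. Your version is considerably more detailed---the paper's proof is a one-line corollary---and the extra justification you supply (symmetrization via $P^{-1/2}KP^{-1/2}$ to ensure real eigenvalues and continuity, plus the explicit boundedness argument) fills in points the paper leaves implicit.
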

\begin{proof}
This follows as a corollary of Proposition \ref{prop:limitprecond}, noting that in the limit,
$\text{spec}(P^{-1} K) = \{1\} \cup \text{spec}(Q^\star)$.
\end{proof}

\begin{remark}
We have proved that the spectral condition number of the preconditioned matrix $\kappa_\mu := \kappa(P^{-1}K)$ is bounded along the interior-point iterations,
without using the regularization procedure described in~\cite{Gondzio_2010}.
Moreover, under our assumptions the spectral condition number converges to a fixed value depending only on
$\lambda_{min}(Q^\star)$ and $\lambda_{max}(Q^\star)$.
This is important, as the error made by the CG iterates can be bounded above by (see \cite[Chapter 10]{golub2013matrix}):
\begin{equation}
  \| \boldx_k - \boldx_\star \|_{P^{-1/2} K P^{-1/2}} \leq
  2 \Big( \dfrac{\sqrt{\kappa_\mu} - 1}{\sqrt{\kappa_\mu} + 1} \Big)^k \|\boldx_0 - \boldx_\star \|_{P^{-1/2} K P^{-1/2}} \;.
\end{equation}
The significance of this result is that, if bound stays constant on the entire problem class, then the algorithm presented here is scalable (in the sense that the total effort is bounded above by the number of preconditioned conjugate gradient iterations times the effort per iteration).
As different from \cite{Gondzio_2010}, our method does not require the computation of an incomplete
Cholesky factorization to update the preconditioner at each IPM iteration, which would be intractable in our case.
For the algorithm proposed here it can shown to be quasilinear (limited by the effort of the FFT applications which can be $O(n \log^p n)$ for some small factor $p$) and the scalability is indeed demonstrated in \S \ref{sec:num-results}. Given that key matrix $\lambda_{min}(Q^\star)$ is a submatrix of an outer product of orthogonal vectors, and a very small matrix if the solution is sparse, it can be expected that the limiting condition number \eqref{eq:limitcond} will be $O(1)$, similar to \cite{Fountoulakis_2013}. This statement can be made precise in probability if the rows of $M_\perp$ are drawn at random \cite{candes2005l1} and is partially responsible for the success of compressed sensing in terms of recovery power.
\end{remark}

\begin{remark}\label{r:jacek}
Proposition~\ref{prop:boundskappa} establishes a slightly different result
than in \cite{Fountoulakis_2013}, which uses
an alternative method based on the \emph{restricted isoperimetry property} (RIP) of the matrix $\Mperp$ to derive
similar bounds on the condition number of the preconditioned matrix. As that reference leverages
the problem's structure, the bounds derived in \cite{Fountoulakis_2013} require more stringent assumptions,
but hold at all IPM iterations. The result can be summarized as follows.
It uses a formulation slightly different than ours
for the compressed sensing problem: For $F = \big[ \Mperp , -\Mperp \big]$, it investigates
the behavior of the condensed KKT system
\begin{equation}
    \big( \Theta^{-1} + F F^\top \big) \Delta x = \boldr \; ,
\end{equation}
with $\Theta^{-1} = S^{-1} Z \in \mathbb{R}^{2n \times 2n}$ and $\boldr \in \mathbb{R}^{2n}$ a given right-hand-side.
Let $K =  \Theta^{-1} + F F^\top $ and the preconditioner $P = \Theta^{-1} + \rho I$, defined for $\rho = \frac{n}{m}$.
As we approach convergence, most entries in $\Theta^{-1}$ become very large.
For a given $C \gg 1$, we note the number of small components $l := \#(\Theta_j^{-1} < C)$.
Assuming:
\begin{itemize}
    \item[(P1)] $\Mperp$ has rows close to orthonormal (there exists a small $\delta$  such that $\| \Mperp \Mperp^\top - I \|_2 \leq \delta$);
    \item[(P2)] $\Mperp$ satisfies a variant of the RIP: every $l$ columns of $\Mperp$ are almost orthogonal and have similar norms (for every matrix $B$ composed of arbitrary $l$ columns of $\Mperp$, there exists $\delta_l$ such that $\|\rho B^\top B - I \|_2 \leq \delta_l$).
\end{itemize}
Under (P1) and (P2), the cited reference proves in \cite[Lemma 1, p.14]{Fountoulakis_2013} the eigenvalues of the matrix $P^{-1} K$ are clustered around $1$:
for all $\lambda \in \text{spec}(P^{-1} K)$,
\begin{equation}
\label{eq:boundsfountoulakis}
 | \lambda - 1 | \leq \delta_l + \frac{1}{4}
 \dfrac{\big( 1 + \delta - \rho + 2 \sqrt{1 + \delta} \big)^2}{\rho \delta_l C} \; ,
\end{equation}
The parameter $C$ depends on $\mu$: as $\mu$ decreases to $0$, we can prove that $\lim_{\mu \to 0} C(\mu) = +\infty$.
Using the bounds \eqref{eq:boundsfountoulakis} in our case, and noting that we assume row orthogonality (implying $\delta=0$), we get that at the limit
$|\lambda - 1| \leq \delta_l$, leading to the following alternative
bounds on the preconditioned matrix:
\begin{equation}
  \lim_{\mu \to 0} \; \kappa_\mu = \dfrac{1 + \delta_l}{1 - \delta_l} \; ,
\end{equation}
a very similar result to this work.

Our result, however, is asymptotic in nature, it depends only on the conditioning
of the matrix $Q^\star$ --- a subset of the matrix $\Mperp^\top \Mperp$ --- whereas the constant
$\delta_l$ used in \cite{Fountoulakis_2013} depends on all the data in $\Mperp^\top \Mperp$ and it is intimately connected to RIP in (P2). RIP is a very suitable assumption when the rows of $M_\perp$ are drawn at random~\cite{candes2005l1}. On the other hand, when the missing data pattern is fixed and very structured, as is the case in our application, verifying RIP is NP-hard \cite{weed2017approximately}. Moreover, as our analysis shows, the good behavior is important only for the ``terminal" submatrix $Q^\star$, something far more likely to occur in practice. While not resulting in a materially better a priori bound than \cite{Fountoulakis_2013} for this class, the less restrictive assumptions in our work allow in principle to explain good behavior of preconditioned conjugate interior point approaches for other application settings, including $\ell_1$-regularized quadratic programming (through the substitution $M_\perp^T M_\perp \rightarrow Q_q$, the matrix of the quadratic program, provided that the corresponding limit \eqref{eq:limitcond} is small enough).
\end{remark}

\section{Applications}\label{sec:applications}
We demonstrate our approach on two datasets, namely a synthetic dataset and a real dataset that arises in material science. Below, we describe the details of experimental setting.
\subsection{Synthetic Dataset}\label{sec:synth} We generate synthetic datasets that are noisy and have missing values. The true signal $\boldsymbol{x}\in\mathbb{R}^{N_x\times N_y\times N_z}$ is generated from
\begin{multline}
        \boldsymbol{x}_{ijk} = \left(cos\left(\frac{2\pi}{N_{x}}\cdot i \right) + 2 sin\left(\frac{2\pi}{N_{x}}\cdot i \right)\right)\times \left(cos\left(\frac{2\pi}{N_{y}}\cdot 2j \right) + 2 sin\left(\frac{2\pi}{N_{y}}\cdot 2j \right)\right)\\
        \times \left(cos\left(\frac{2\pi}{N_{z}}\cdot 3k \right) + 2 sin\left(\frac{2\pi}{N_{z}}\cdot 3k \right)\right)\;\;\text{for}\;\; i\in[N_{x}], j\in[N_{y}], k\in[N_{z}].
\end{multline}
The noise $\widetilde{\bolde}\in\mathbb{R}^{N_x\times N_y\times N_z}$ has all entries \textit{iid} drawn from a uniform distribution between $0$ and $1$.
To mimic the removal of the information around the Bragg peaks of the unperturbed crystalline structure, we intentionally introduce missing values into our synthetic dataset, with approximately 15\% of the data assumed to be missing on average.
We demonstrate our approach on 3D-datasets of sizes up to $560 \times 560 \times 560$ (see table \ref{tab:cpu_gpu_comparison_artificial}).

\subsection{Recovering the 3D-$\Delta$PDF in crystals}
\label{sec:real_datasets}

The pair distribution function analysis of powder samples (powder PDF) of crystals is a common tool for investigating disordered structures.
The powder PDF is computed by the Fourier transform of the total X-ray or neutron powder diffraction pattern of a sample and provides a direct measure for the real interatomic distances in a material.
PDFs from single crystals (3D-PDF) may be calculated by the Fourier transform either of the total single crystal diffraction pattern (total 3D-PDF) or of the diffuse scattering alone (3D-$\Delta$PDF) \cite{welberry2022diffuse}.
When the PDF is calculated in three dimensions, it is possible to remove the Bragg peaks before performing the transform by using a technique known as ``punch-and-fill" \cite{kobas2005structural}, so that the resulting vector maps only
include those whose probabilities that differ from the average structure.
This allows the disorder to be directly visualized without extensive modeling and vastly simplifies interpretation.
The punch-and-fill algorithm involves the removal of Bragg peaks and ``fills" the data in the region of the Bragg peaks. Typically, the missing data at the Bragg peak locations are interpolated such that the missing data satisfies the 3D Laplace difference equations \cite{RH2022}.
This has the following advantages: a) it results in a banded linear system of equations (tridiagonal for 1D, pentadiagonal for 2D and so on) (hence one can leverage optimized sparse linear solvers), b) another major advantage is instead of solving a single large linear system, one can solve smaller local linear systems corresponding to each Bragg peak, an embarrassingly parallel operation.
For example for a 3D volume data with 125M pixels (500 $\times$ 500 $\times$ 500), instead of solving a large sparse linear system of size 250,000,000, one can solve 1000 sparse linear systems of size 10,000 in parallel.
However, the “punch-and-fill” algorithm (PFA) can generate unintentional ripples (or noise) in the resulting 3D-$\Delta$PDF because of sharp edges in the data interpolation. In order to mitigate this distortion, we propose to exploit the fact that the 3D-$\Delta$PDFs should be inherently sparse,  and use compressed sensing \S \ref{sec:formulation} as a regularizer to recover the missing information. Figure \ref{fig:VO2} demonstrates the results for a Molybdenum Vanadium Dioxide crystal; the size of the dataset used here is 300 $\times$ 400 $\times$ 400. Figure \ref{fig:VO2} shows a slice of the volume. The left panel shows the result of a ``Punch-and-fill" algorithm where the missing values are filled using a Laplace interpolation method (see \cite{RH2022} for details) and the right panel shows the result of recovering the sparse DFT using an $\ell_1$ penalty minimization. We note that the interpolation introduces ripples and other artifacts, where as the $\ell_1$ penalty minimization described in this paper results in a much cleaner 3D-$\Delta$PDF, demonstrating the recovery potential of compressed sensing for real data.

While we will detail computational performance in the next section; we emphasize that using the combined innovations described in this paper allowed us to improve our solution time for the real data problem from five days to less than five minutes over the last twenty four months. The comparison involved changing both the code base and the computational architecture: considering all the improvements, the CPU to GPU improvement is only about a factor of 25, see Table~\ref{tab:cpu_gpu_comparison}. Nevertheless, the leap forward is unmistakable and we are currently preparing to explore further the benefits of such a technology for our diffuse scattering application.
\begin{figure}
\includegraphics[width=\textwidth,trim={0 10in 0 10in}]{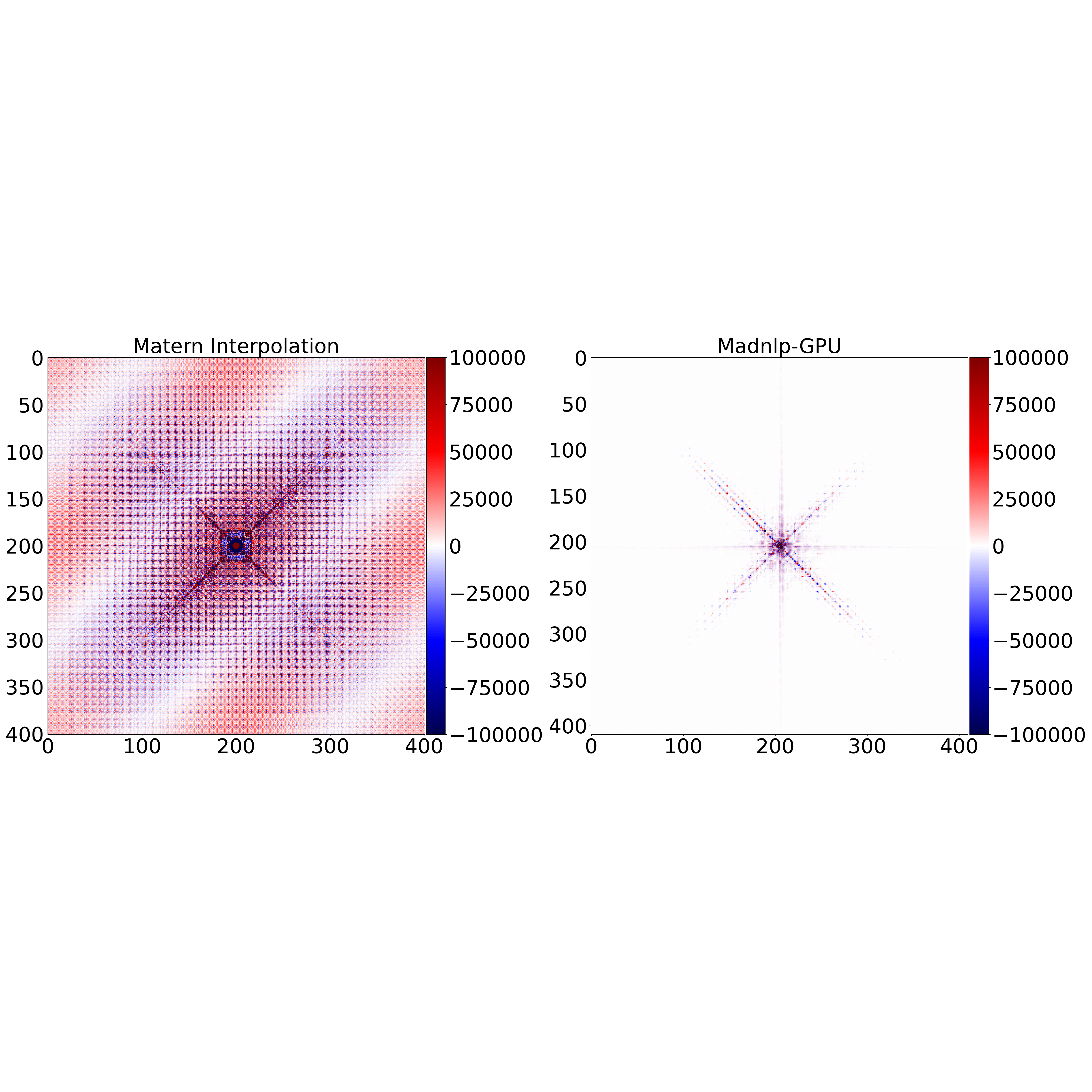}
\caption{\label{fig:VO2} Left panel shows the ordinary punch-and-fill, right panel shows the result obtained through $\ell_1$ constraint minimization approach.}
\end{figure}

\section{Implementation and numerical experiments}
\label{sec:num-results}

We developed our compressed sensing solver using Julia \cite{bezanson-edelman-karpinski-shah-2017}, version $1.11$.
The completed implementation is available the Julia package \texttt{CompressedSensingIPM.jl}\footnotemark.
\footnotetext{\url{https://github.com/exanauts/CompressedSensingIPM.jl}}

It relies on \texttt{MadNLP.jl} \cite{shin2023accelerating}, a generic primal-dual interior point method.
By default MadNLP solves the symmetric indefinite system~\eqref{eq:augkkt} at each IPM iteration.
We have developed a custom extension to solve instead the condensed system~\eqref{eq:condensedkkt}
with an iterative method, in a matrix-free fashion keeping all the matrices implicit.
To solve the KKT systems, we use the conjugate gradient method from the collection of Krylov methods \texttt{Krylov.jl} \cite{montoison-orban-2023}.
For forward and inverse Fourier transforms, our implementation employs the \texttt{FFTW} and \texttt{cuFFT} libraries, accessed through Julia’s interfaces \texttt{FFTW.jl} and \texttt{CUDA.jl}, respectively.
These libraries allow efficient computation on both CPUs and GPUs.

\texttt{MadNLP.jl} implements a vanilla interior-point method for NLP problems, and does not exploit the structure of the QP problem \eqref{eq:problem}.
The predictor-corrector method introduced by Mehrotra \cite{mehrotra-1992} would likely reduce the total number of interior-point iterations, but it would need to be adapted to limit the number of KKT systems~\eqref{eq:condensedkkt} to solve per iteration:
Unlike direct methods, we cannot reuse the factorization of the KKT matrix when solving the same KKT systems for two different right-hand-sides using a Krylov method.
A single-step predictor-corrector approach \cite{wright1997primal} is an interesting avenue to investigate.
This method could lead to speedups on both CPU and GPU architectures.
We leave this research direction for future work, along with the development of a potential specialized QP solver.

In our first experiment, we demonstrate the scalability of the solver on a problem with 104 million variables.
Using \texttt{MadNLP.jl}, the solver achieves convergence in 36 iterations, meeting a primal-dual residual threshold of $10^{-8}$.
Each iteration solves the KKT system with a stopping criterion based on an absolute tolerance of $10^{-12}$ for the preconditioned residual norm.

Matrix-free solvers are crucial in this context, as explicitly forming the KKT matrices is computationally prohibitive.
Dense linear solvers are not feasible due to the storage costs of dense DFT and IDFT matrices.
Despite the typical slow convergence of Krylov methods in interior-point solvers, our tailored preconditioner ensures that the number of Krylov iterations per KKT system remains below 105.

Figure~\ref{fig:krylov_iterations} illustrates the number of Krylov iterations required for each IPM iteration.

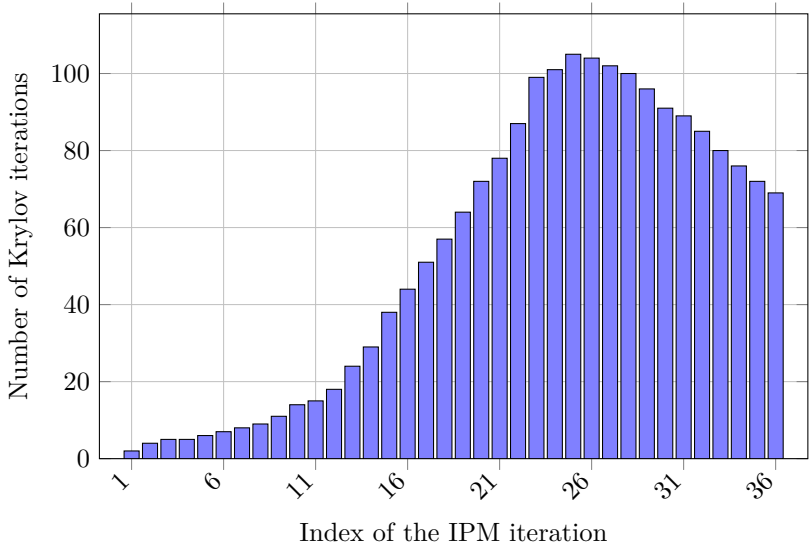
\begin{figure}[h!]
\centering
\begin{tikzpicture}
\begin{axis}[
    ybar,
    bar width=0.2cm,
    enlarge x limits=0.05,
    xlabel={Index of the IPM iteration},
    ylabel={Number of Krylov iterations},
    ymin=0,
    xtick={1, 6, 11, 16, 21, 26, 31, 36},
    xticklabel style={rotate=45, anchor=east},
    height=7.5cm,
    width=11cm,
    grid=both,
    major grid style={line width=0.2pt,draw=gray!50},
    minor grid style={line width=0.1pt,draw=gray!20}
]

\addplot[
    fill=blue!50!white
] coordinates {
    (1, 2) (2, 4) (3, 5) (4, 5) (5, 6) (6, 7)
    (7, 8) (8, 9) (9, 11) (10, 14) (11, 15) (12, 18)
    (13, 24) (14, 29) (15, 38) (16, 44) (17, 51) (18, 57)
    (19, 64) (20, 72) (21, 78) (22, 87) (23, 99) (24, 101)
    (25, 105) (26, 104) (27, 102) (28, 100) (29, 96) (30, 91)
    (31, 89) (32, 85) (33, 80) (34, 76) (35, 72) (36, 69)
};
\end{axis}
\end{tikzpicture}
\caption{Number of Krylov iterations per IPM iteration for a problem with 104 million variables.}
\label{fig:krylov_iterations}
\end{figure}

In our second experiment, we evaluate the total runtime for solving a problem with 104 million variables using actual X-ray scattering data on both CPU and GPU architectures (see \S\ref{sec:real_datasets} for more details).
Table~\ref{tab:cpu_gpu_comparison} presents the runtimes.

\begin{table}[ht!]
    \centering
    \begin{tabular}{ccc}
        \toprule
        \textbf{CPU} & \textbf{GPU} & \textbf{Speed-up} \\
        \midrule
        6,488 seconds & 274 seconds & $23.68\times$ \\
        \bottomrule
    \end{tabular}
    \vspace{0.3cm}
    \caption{Total runtimes on CPU and GPU architectures for X-ray diffuse scattering data with 104 million variables.}
    \label{tab:cpu_gpu_comparison}
\end{table}
These runtimes highlight that the GPU takes advantage of the parallelism inherent in the divide-and-conquer algorithms used for Fourier transforms.
The GPU also harnesses its computational power for vectorized linear algebra operations, such as vector updates, both in the Krylov solver and in the interior-point method.
These operations are accelerated through the use of \texttt{cuBLAS} and custom kernels generated via Julia's broadcasting capabilities.

In our third experiment, we evaluate the total runtime for solving artificial compressed sensing problems of various sizes on both CPU and GPU architectures (see \S\ref{sec:synth} for more details). Table~\ref{tab:cpu_gpu_comparison_artificial} presents the runtime results for each architecture.
The corresponding scalability plot, shown in~\Cref{fig:scalability_plot_cpu_gpu}, illustrates the performance scaling of both CPU and GPU across a range of problem sizes.
\begin{table}[ht!]
    \centering
    \begin{tabular}{crrrr}
        \toprule
        $N_x \times N_y \times N_z$ & Variables   & CPU & GPU  & Speed-up \\
        \midrule
        $8 \times 8 \times 8$       & 1,024       & 0.0031  & 0.63   & 0.005 \\
        $16 \times 16 \times 16$    & 8,192       & 0.013   & 0.72   & 0.018 \\
        $32 \times 32 \times 32$    & 65,536      & 0.10    & 0.68   & 0.147 \\
        $64 \times 64 \times 64$    & 524,288     & 1.29    & 0.71   & 1.82  \\
        $96 \times 96 \times 96$    & 1,769,472   & 4.49    & 0.79   & 5.64  \\
        $128 \times 128 \times 128$ & 4,194,304   & 10.34   & 1.07   & 9.67  \\
        $192 \times 192 \times 192$ & 14,155,776  & 41.78   & 3.22   & 12.98 \\
        $256 \times 256 \times 256$ & 33,554,432  & 101.38  & 5.88   & 17.24 \\
        $384 \times 384 \times 384$ & 113,246,208 & 367.78  & 20.66  & 17.80 \\
        $512 \times 512 \times 512$ & 268,435,456 & 929.26  & 104.55 & 8.89  \\
        $560 \times 560 \times 560$ & 351,232,000 & 1290.77 & 159.66 & 8.08  \\
        \bottomrule
    \end{tabular}
    \vspace{0.3cm}
    \caption{Performance results for an artificial 3D compressed sensing problem.}
    \label{tab:cpu_gpu_comparison_artificial}
\end{table}

\begin{figure}[ht]
    \centering
    \begin{tikzpicture}
    \begin{loglogaxis}[width=10cm, height=7.5cm,
                       xlabel={Number of variables},
                       ylabel={Time (seconds)},
                       legend pos=north west,
                       grid=both,
                       grid style={dotted, gray},
                       xtick={1e3,1e4,1e5,1e6,1e7,1e8,1e9},
                       ytick={1e-3,1e-2,1e-1,1e0,1e1,1e2,1e3},
                       major grid style={solid, gray},
                       minor tick num=0,
                       xmajorgrids,
                       ymajorgrids,
                       tick align=outside,
                       tick label style={font=\small},
                       legend style={font=\small}]

        \addplot[color=blue, mark=o, mark options={solid}, thick]
        table{
                1024 0.0031
                8192 0.013
                65536 0.10
                524288 1.29
                1769472 4.49
                4194304 10.34
                14155776 41.78
                33554432 101.38
                113246208 367.78
                268435456 929.26
                351232000 1290.77
            };
            \addlegendentry{CPU}

        \addplot[color=red, mark=square*, mark options={solid}, thick]
        table{
            1024       0.63
            8192       0.72
            65536      0.68
            524288     0.71
            1769472    0.79
            4194304    1.07
            14155776   3.22
            33554432   5.88
            113246208  20.66
            268435456  104.55
            351232000  159.66
        };
        \addlegendentry{GPU}
    \end{loglogaxis}
\end{tikzpicture}
\caption{Scalability of CPU and GPU performance across different problem sizes.}
\label{fig:scalability_plot_cpu_gpu}

\end{figure}
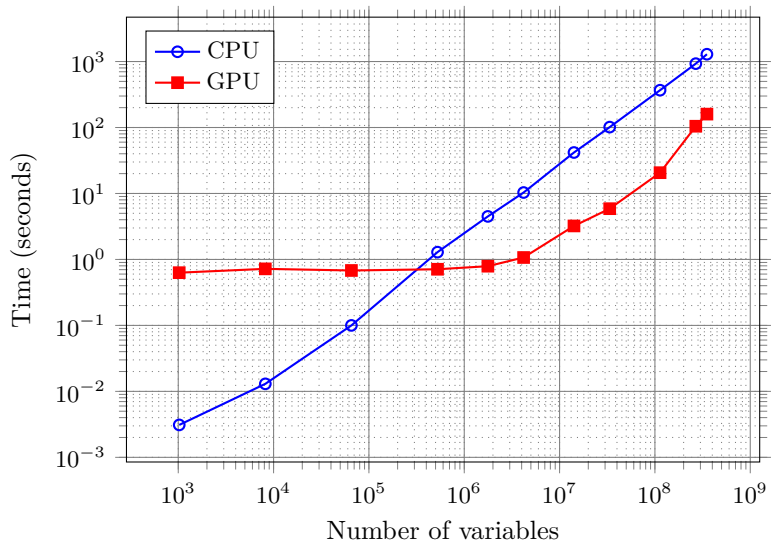

The results indicate that GPUs become more advantageous than CPUs for problems exceeding 500,000 variables, as shown in the scalability plot in Figure~\ref{fig:scalability_plot_cpu_gpu}.
To further investigate the performance benefits, we also conducted tests comparing the execution times of the FFT and IFFT on CPU and GPU architectures using the libraries \texttt{FFTW} and \texttt{cuFFT}.
Table~\ref{table:fft_ifft_times} compares the timings.
\begin{table}[ht!]
    \centering
    \begin{tabular}{|c|c|c|c|c|c|c|}
        \hline
        \multirow{2}{*}{$n$} & \multicolumn{3}{c|}{FFT} & \multicolumn{3}{c|}{IFFT}\\
        \cline{2-7}
          & \texttt{FFTW} & \texttt{cuFFT} & ratio
          & \texttt{FFTW} & \texttt{cuFFT} & ratio\\
        \hline
           $10$   & $1.47 \times 10^{-6}$ & $1.65 \times 10^{-5}$ & 0.09  & $1.47 \times 10^{-6}$ & $2.18 \times 10^{-5}$ & 0.07
        \\ $10^2$ & $3.17 \times 10^{-6}$ & $2.37 \times 10^{-5}$ & 0.13  & $3.39 \times 10^{-6}$ & $2.17 \times 10^{-5}$ & 0.16
        \\ $10^3$ & $1.55 \times 10^{-5}$ & $2.19 \times 10^{-5}$ & 0.71  & $1.18 \times 10^{-5}$ & $2.44 \times 10^{-5}$ & 0.48
        \\ $10^4$ & $8.29 \times 10^{-5}$ & $3.44 \times 10^{-5}$ & 2.41  & $8.59 \times 10^{-5}$ & $3.65 \times 10^{-5}$ & 2.35
        \\ $10^5$ & $1.30 \times 10^{-3}$ & $2.01 \times 10^{-4}$ & 6.26  & $9.59 \times 10^{-4}$ & $2.20 \times 10^{-4}$ & 4.36
        \\ $10^6$ & $1.64 \times 10^{-2}$ & $7.70 \times 10^{-4}$ & 21.34 & $1.60 \times 10^{-2}$ & $8.10 \times 10^{-4}$ & 19.70
        \\ $10^7$ & $1.59 \times 10^{-1}$ & $7.00 \times 10^{-3}$ & 22.69 & $1.67 \times 10^{-1}$ & $7.27 \times 10^{-3}$ & 22.97
        \\ $10^8$ & $2.19$    & $0.071$   & 30.66 & $2.36$    & $0.073$   & 32.48
        \\ $10^9$ & $27.17$   & $0.75$    & 35.90 & $29.03$   & $0.93$    & 31.27
        \\ \hline
    \end{tabular}
    \vspace{0.3cm}
    \caption{Execution time of Fourier transforms with \texttt{FFTW} and \texttt{cuFFT}.}
    \label{table:fft_ifft_times}
\end{table}
The tests were conducted on an NVIDIA GH200.
While our tests were conducted on NVIDIA GPUs, the code is designed to be portable.
By replacing \texttt{cuFFT} with \texttt{rocFFT}, support for AMD GPUs can be enabled through \texttt{AMDGPU.jl}.

\section{Conclusions and perspectives}\label{sec:conclusions}

Motivated by the problem of diffuse scattering in X-ray crystallography, this paper introduces a method for recovering sparse DFTs from noisy signals with missing values by using compressed sensing.
Instead of using a first-order method, we solve the resulting optimization problem
with an interior point method that has the potential to achieve much more robust convergence at the cost of solving more complex per-iteration problems.
The linear systems in the interior point algorithm can be solved efficiently by a preconditioned Krylov method: our numerical results have showed
that, even for very large problems, they need only up to around 100 Krylov iterations per Newton step. Our analysis framework supports these findings,
in accordance with that were also predicted by \cite{Fountoulakis_2013}.
However, while the resulting bounds on the preconditioned matrix we obtain are similar to \cite{Fountoulakis_2013}, the approach relies on different assumptions which are closer to the general analysis of optimization methods \cite{wright2001effects}.
As a result we believe it can be extended to other cases, such as certain quadratic programs, or matrices not derived from orthogonal operators to indicate that the condition number of the preconditioned matrix stays bounded there as well.

Our algorithmic findings were implemented in the openly available Julia package \texttt{CompressedSensingIPM.jl} that demonstrates the efficiency of solving large-scale compressed sensing problems formulated as a LASSO optimization problem on a GPU. The package was tested on problems with more than $10^8$ variables including one case with real X-ray crystallography data where compressed sensing shows a vast improvement over the classical punch-and-fill algorithm in terms of solution quality: the resulting algorithm runs in less than five minutes on a GPU.

We aim to improve and leverage these findings in multiple directions. First we aim to apply this to the entire database of diffuse scattering at Argonne, and prepare for the expected increase in the number of voxels (and thus problem size) through new sensors. Moreover, we expect that further improvements can be obtained by refining the sparsity model (since all atoms do show some width at the respective resolution) as well as using a windowing approach to control for aliasing artifacts.  Moreover, in the context of crystallographic X-ray imaging, periodic missing structures arising from regular atomic arrangements present a promising avenue for improving the preconditioning even further. Finally, we plan to extend these findings to other settings where the noise model results in nonlinear likelihoods and more complicated objective functions.

\section{Acknowledgment}

This research used resources of the Argonne Leadership Computing Facility, a U.S. Department of Energy (DOE) Office of Science user facility at Argonne National Laboratory and is based on research supported by the U.S. DOE Office of Science-Advanced Scientific Computing Research Program, under Contract No DE-AC02-06CH11357. We also thank Raymond Osborn, Stephan Rosenkranz, and Matt Krogstad for sharing the experimental data and providing valuable feedback on the results. We thank Charlotte Haley for her feedback on the spectral estimation problem in general.


\end{document}